\newtheorem{theorem}{Theorem}[section]
\newtheorem{lemma}[theorem]{Lemma}
\newtheorem{proposition}[theorem]{Proposition}
\newtheorem{claim}[theorem]{Claim}
\newtheorem{question}[theorem]{Question}
\theoremstyle{definition}
\newtheorem{remark}[theorem]{Remark}
\newcommand{\cC}{\mathcal{C}}
\newcommand{\cL}{\mathcal{L}}
\newcommand{\cF}{\mathcal{F}}
\newcommand{\cS}{\mathcal{S}}
\newcommand{\cT}{\mathcal{T}}
\newcommand{\cI}{\mathcal{I}}
\newcommand{\cY}{\mathcal{Y}}
\newcommand{\cZ}{\mathcal{Z}}
\newcommand{\N}{\mathbb{N}}
\newcommand{\Z}{\mathbb{Z}}
\newcommand{\E}{\mathbb{E}}
\renewcommand{\Pr}{\mathbb{P}}
\newcommand{\1}{\mathbf{1}}
\newcommand{\zero}{\mathbf{0}}
\DeclareMathOperator\dist{dist}
\newcommand{\iid}{i.i.d.}
\newcommand{\ffiid}{ffiid}
\newcommand{\fvffiid}{fv-ffiid}
\title{Finitary coding for the sub-critical Ising model with finite expected coding volume}
\date{\today}
\author{Yinon Spinka}
\address{University of British Columbia.
    Department of Mathematics.
 	Vancouver, BC V6T 1Z2, Canada.}
    \email{yinon@math.ubc.ca}
\thanks{Research supported by Israeli Science Foundation grant 861/15, the European Research Council starting grant 678520 (LocalOrder), and the Adams Fellowship Program of the Israel Academy of Sciences and Humanities}
\begin{document}

\begin{abstract}
	It has been shown by van den Berg and Steif~\cite{van1999existence} that the sub-critical Ising model on~$\Z^d$ is a finitary factor of a finite-valued \iid\ process. We strengthen this by showing that the factor map can be made to have finite expected coding volume (in fact, stretched-exponential tails), answering a question of van den Berg and Steif.
	The result holds at any temperature above the critical temperature. An analogous result holds for Markov random fields satisfying a high-noise assumption and for proper colorings with a large number of colors.
\end{abstract}

\maketitle

\section{Introduction and main results}\label{sec:introduction}

Let $(S,\cS)$ and $(T,\cT)$ be two measurable spaces, and let $X=(X_v)_{v \in \Z^d}$ and $Y=(Y_v)_{v \in \Z^d}$ be $(S,\cS)$-valued and $(T,\cT)$-valued stationary random fields (i.e., $\Z^d$-processes) for some $d \ge 1$.
A \emph{coding} from $Y$ to $X$ is a measurable function $\varphi \colon T^{\Z^d} \to S^{\Z^d}$, which is \emph{translation-equivariant}, i.e., commutes with every translation of $\Z^d$, and which satisfies that $\varphi(Y)$ and $X$ are identical in distribution. Such a coding is also called a \emph{factor map} or \emph{homomorphism} from $Y$ to $X$, and when such a coding exists, we say that $X$ is a \emph{factor} of $Y$.

The \emph{coding radius} of $\varphi$ at a point $y \in T^{\Z^d}$, denoted by $R(y)$, is the minimal integer $r \ge 0$ such that $\varphi(y')_\zero=\varphi(y)_\zero$ for almost all $y' \in T^{\Z^d}$ which coincide with $y$ on the ball of radius $r$ around the origin in the graph-distance, i.e., $y'_v=y_v$ for all $v \in \Z^d$ such that $\|v\|_1 \le r$. It may happen that no such~$r$ exists, in which case, $R(y)=\infty$. Thus, associated to a coding is a random variable $R=R(Y)$ which describes the coding radius. We refer to $R^d$ as the \emph{coding volume}. A coding is called \emph{finitary} if $R$ is almost surely finite. When there exists a finitary coding from $Y$ to $X$, we say that $X$ is a \emph{finitary factor} of $Y$.

We say that a non-negative random variable $R$ has \emph{exponential tails} if $\Pr(R \ge r) \le Ce^{-cr}$ for some $C,c>0$ and all $r \ge 0$, and that it has \emph{stretched-exponential tails} if $\Pr(R \ge r) \le Ce^{-r^{c}}$ holds instead. When there exists a coding from $Y$ to $X$ whose coding radius has (stretched-)exponential tails, we say that $X$ is a finitary factor of $Y$ with (stretched-)exponential tails.

In this paper, we shall be concerned with finitary factors of \iid\ (independent and identically distributed) processes, distinguishing between the cases when the \iid\ process is finite-valued or infinite-valued. We use the abbreviation \emph{\ffiid} to denote a finitary factor of an \iid\ process (perhaps infinite-valued), and \emph{\fvffiid} to denote a finitary factor of a finite-valued \iid\ process.

Our main example is the (ferromagnetic) Ising model in $d \ge 2$ dimensions -- a classical discrete spin system in statistical mechanics. A \emph{Gibbs measure for the Ising model} on $\Z^d$ at inverse temperature $\beta>0$ is a probability measure $\mu$ on $\{-1,+1\}^{\Z^d}$ which satisfies that, if the random field $X=(X_v)_{v \in \Z^d}$ has distribution $\mu$, then for any vertex $v \in \Z^d$,
\begin{equation}\label{eq:ising-def}
\Pr\Big(X_v=\pm 1 ~\big|~ X|_{\Z^d \setminus \{v\}}\Big) = \frac{\exp\Big[\pm \beta \sum_{u \in N(v)} X_u \Big]}{\exp\Big[\beta \sum_{u \in N(v)} X_u \Big] + \exp\Big[-\beta \sum_{u \in N(v)} X_u \Big]} \qquad\text{almost surely} ,
\end{equation}
where $N(v)$ denotes the neighborhood of $v$. We note that although one usually defines Gibbs measures for the Ising model through their conditional distributions on any finite set, the above single-site specifications are sufficient as they determine the conditional finite-dimensional distributions.

It is well known~(see, e.g., \cite[Theorem~3.1]{georgii2001random} or \cite[pages 189-190 and 204]{liggett2012interacting}) that there exists a critical value $\beta_c(d) \in (0,\infty)$ such that there is a unique Gibbs measure for the Ising model on $\Z^d$ at inverse temperature $\beta<\beta_c(d)$ and multiple such Gibbs measures at inverse temperature $\beta>\beta_c(d)$.
Van den Berg and Steif~\cite{van1999existence} showed that the unique Gibbs measure in the former case is \fvffiid. We improve upon this and answer a question from~\cite{van1999existence} by showing the following.

\begin{theorem}\label{thm:ising}
Let $d \ge 2$ and let $\mu$ be the unique Gibbs measure for the Ising model on $\Z^d$ at inverse temperature $\beta<\beta_c(d)$. Then $\mu$ is \fvffiid\ with stretched-exponential tails.
\end{theorem}

\begin{remark}
	It has been shown in~\cite{van1999existence} that a phase transition (i.e., existence of multiple Gibbs measures) presents an obstruction for the existence of a finitary coding from an \iid\ process. In particular, at inverse temperature $\beta>\beta_c(d)$, no translation-invariant Gibbs measure for the Ising model is \ffiid.
\end{remark}

\begin{remark}\label{rem:ising-ffiid-exp-tails}
In the course of proving that $\mu$ is \fvffiid, it is shown in~\cite{van1999existence} that if one does not insist on a coding from a \emph{finite-valued} \iid\ process, then one may obtain a coding with exponential tails (so that $\mu$ is \ffiid\ with exponential tails); see Section~\ref{sec:pca} for more details. Similarly, it also follows that the critical Ising measure (i.e., when $\beta=\beta_c(d)$) is \ffiid\ (this relies on the fact that the phase transition is continuous~\cite{yang1952spontaneous,aizenman1986critical,aizenman2015random}), though it is shown in~\cite{van1999existence} (a result which was obtained jointly with Peres) that the coding volume $R^d$ cannot have finite expectation.
\end{remark}

\begin{remark}
	It is still unknown whether the critical Ising measure is \fvffiid.
\end{remark}

\begin{remark}
	In dimension $d=1$, the Ising model has a unique Gibbs measure $\mu$ at any finite inverse temperature $\beta$, and this measure is the distribution of an ergodic stationary Markov chain. It follows from a result in~\cite{harvey2006universal} that $\mu$ is \fvffiid\ with exponential tails (in fact, $\mu$ is a finitary factor with exponential tails of any \iid\ process with entropy strictly larger than that of $\mu$, and it is finitarily isomorphic to any \iid\ process with equal entropy~\cite{keane1979finitary}).
\end{remark}

\begin{remark}
	The FK random-cluster model is a dependent percolation model with infinite-range interactions, which is closely related to the Ising model. For background on this model, see~\cite{grimmett2006random}. Using the Edwards--Sokal coupling~\cite{swendsen1987nonuniversal,edwards1988generalization}, it is an easy consequence of Theorem~\ref{thm:ising} that the sub-critical FK-Ising measure (i.e., the random-cluster measure with parameters $q=2$ and any $p<p_c(q)$) is \fvffiid\ with stretched-exponential tails. Indeed, under this coupling, given the Ising configuration, the state of the edges in the random-cluster configuration are independent. It is shown in~\cite{harel2018finitary} that this result extends to the random-cluster model with any $q \ge 1$ and $p<p_c(q)$.
\end{remark}

Our second result concerns another well-known model of statistical mechanics -- proper colorings. Let $q \ge 3$. A \emph{proper $q$-coloring} of $\Z^d$ is a configuration $x \in \{1,\dots,q\}^{\Z^d}$ satisfying that $x_u \neq x_v$ for any adjacent vertices $u$ and $v$. A \emph{Gibbs measure for proper $q$-colorings} of $\Z^d$ is a probability measure $\mu$ on $\{1,\dots,q\}^{\Z^d}$ which is supported on proper $q$-colorings and satisfies that, if the random field $X=(X_v)_{v \in \Z^d}$ has distribution $\mu$, then for any finite set $\Lambda \subset \Z^d$, the conditional distribution of $X$ given its restriction to $\Lambda^c$ is uniform on the set of proper $q$-colorings which agree with $X$ on $\Lambda^c$.
It is well known (e.g., by Dobrushin uniqueness~\cite{salas1997absence}) that there is a unique Gibbs measure for proper $q$-colorings when $q>4d$. We show that this measure is \fvffiid\ with stretched-exponential tails when the number of colors is large enough.

\begin{theorem}\label{thm:proper-colorings}
Let $d \ge 2$ and $q \ge 4d(d+1)$. Let $\mu$ be the unique Gibbs measure for proper $q$-colorings of $\Z^d$. Then $\mu$ is \fvffiid\ with stretched-exponential tails.
\end{theorem}

\begin{remark}
The model of uniform proper $q$-colorings is equivalent to the zero-temperature anti-ferromagnetic $q$-state Potts model. It is intuitively clear that increasing the temperature only makes interactions weaker (the high-temperature model even satisfies high-noise; see below), and indeed, we believe that Theorem~\ref{thm:proper-colorings} extends to the anti-ferromagnetic $q$-state Potts model at any temperature (and $q$ as in the theorem above), though we do not pursue this here.
\end{remark}

Our third result is not about a particular model, but rather about a class of translation-invariant high-noise Markov random fields, which we proceed to define. Let $S$ be finite, let $\mu$ be a probability measure on $S^{\Z^d}$ and let $X=(X_v)_{v \in \Z^d}$ be distributed according to $\mu$. 
We say that $\mu$ is a \emph{Markov random field} if its conditional finite-dimensional distributions depend only on the immediate neighborhood of the finite set being inspected, i.e., if for any finite $V \subset \Z^d$ and any $\xi \in S^V$,
\[ \Pr\Big(X|_V=\xi ~\big|~ X|_{\Z^d \setminus V}\Big) = \Pr\Big(X|_V=\xi ~\big|~ X|_{\partial V}\Big) \qquad\text{almost surely} ,\]
where $\partial V$ denotes the set of vertices at distance 1 from $V$.
The Ising model and proper colorings (or rather the Gibbs measures for those models) are two examples of Markov random fields.

Suppose that $\mu$ is a translation-invariant Markov random field and, for $s \in S$, denote
\[ \gamma_s := \inf_{\substack{\xi \in S^{N(\zero)}\\\Pr(X|_{N(\zero)}=\xi)>0}} \Pr\Big(X_\zero = s ~\big|~ X|_{N(\zero)}=\xi\Big) .\]
We say that $\mu$ satisfies \emph{high-noise} if
\[ \gamma := \sum_{s \in S} \gamma_s > 1 - \frac{1}{2d} .\]
The quantity $\gamma$ is called the multigamma admissibility. It is essentially the probability that an update can be made to the spin at the origin without knowing anything about the values of the spins at its neighbors (see~\cite{haggstrom2000propp} for a more detailed explanation).
We remark that Dobrushin's uniqueness condition~\cite{Dobrushin1968TheDe} (or, alternatively, the ``disagreement percolation'' condition of van den Berg and Maes~\cite{van1994disagreement}) implies that if $\mu$ satisfies high-noise, then it is the only random field with the same conditional finite-dimensional distributions as~$\mu$.

\begin{theorem}\label{thm:high-noise-finitary-coding}
Let $\mu$ be a translation-invariant Markov random field satisfying high-noise. Then $\mu$ is \fvffiid\ with stretched-exponential tails.
\end{theorem}

Theorem~\ref{thm:high-noise-finitary-coding} improves on a result of H{\"a}ggstr{\"o}m and Steif~\cite{haggstrom2000propp} who showed that any translation-invariant high-noise Markov random field is \fvffiid.
Theorem~\ref{thm:high-noise-finitary-coding} applies to numerous models of statistical physics, including the Potts model (both ferromagnetic and anti-ferromagnetic) at high temperature, the hard-core model at low fugacity and the Widom--Rowlinson model at low fugacity (see~\cite{haggstrom2000propp} for more details on this for the Potts and Widom--Rowlinson models).
On the other hand, Theorem~\ref{thm:ising} does not follow from Theorem~\ref{thm:high-noise-finitary-coding}, as the Ising model does not satisfy high-noise when $\beta$ is only slightly smaller than $\beta_c(d)$.
Similarly, Theorem~\ref{thm:proper-colorings} does not follow from Theorem~\ref{thm:high-noise-finitary-coding} (even for large values of $q$), as it is clear that $\gamma=0$ for proper $q$-colorings, regardless of how large $q$ is.

The three theorems will be proved using a general result introduced in Section~\ref{sec:pca} about finitary codings for limiting distributions of probabilistic cellular automata.

\smallskip
\noindent
{\bf Background.}
We give here only a brief background and refer the reader to~\cite{van1999existence} for a more complete description of known results. A fundamental problem in ergodic theory is to understand which processes are isomorphic to which other processes (meaning that there is an almost everywhere invertible factor from one to the other). The very simplest of processes are the \iid\ processes, and therefore, of particular interest are those processes which are isomorphic to an \iid\ process; such processes are termed Bernoulli. The celebrated isomorphism theorem of Ornstein~\cite{ornstein1974} states that any two \iid\ processes of equal entropy are isomorphic (this result was later extended by Keane and Smorodinsky~\cite{keane1979bernoulli} who showed that any two such finite-valued processes are in fact finitarily isomorphic). Ornstein~\cite{ornstein1974} further showed that any factor of an \iid\ process is Bernoulli. This shed a more probabilistic light on the notion of Bernoullicity.

The notion of a finitary factor of \iid\ has the advantage that it allows to compute a symbol in the target process by only revealing (almost surely) finitely many variables of the \iid\ process. This gives a more concrete construction of the target process, which may also be useful for exact simulation algorithms.
Besides this appealing feature, finitary factors of \iid\ have particular relevance in the context of probabilistic models such as those considered here. Let us take the Ising model as an example. It has been shown~\cite{OW73} (see also~\cite{adams1992folner}) that the so-called ``plus state'' (this is the Gibbs measure obtained by taking $+$ boundary conditions) is a factor of an \iid\ process (i.e., is Bernoulli) for any value of the inverse temperature~$\beta$. Thus, the phase transition is not reflected in this notion. However, as shown in~\cite{van1999existence}, it is indeed reflected in the notion of a \emph{finitary} factor: the ``plus state'' is a finitary factor of an \iid\ process when $\beta<\beta_c(d)$, but not when $\beta>\beta_c(d)$.

In constructing a finitary coding from an \iid\ process to a given process, it is desirable for efficiency purposes (e.g., for simulation algorithms) that the \iid\ process be ``small'' and that the coding radius also be typically small. One such qualitative meaning of this is that the \iid\ process is finite-valued and that the coding volume has finite expectation. A more quantitative meaning of this would be to require bounds on the entropy of the \iid\ process and on the tail of the coding radius. Our results are a mixture of the two as they yield a finitary coding from a finite-valued \iid\ process with stretched-exponential tails for the coding radius. In particular, our result about the Ising model (Theorem~\ref{thm:ising}) answers a question of van den Berg and Steif~\cite[Question~2]{van1999existence}, who asked whether the sub-critical Ising measure is \fvffiid\ with finite expected coding volume.

\smallskip
\noindent
{\bf Notation.}
We consider $\Z^d$ as a graph in which two vertices $u$ and $v$ are adjacent if $|u-v|=1$, where $|v|=\|v\|_1:=|v_1|+\cdots+|v_d|$ denotes the $\ell_1$-norm. We denote by $N(v) := \{ u \in \Z^d : |u-v|=1 \}$ the neighborhood of $v$. For a set $U \subset \Z^d$, we write $\dist(v,U) := \min_{u \in U} |u-v|$. We use $\zero$ to denote the origin $(0,\dots,0) \in \Z^d$ and $e_1:=(1,0,\dots,0) \in \Z^d$. We use $\N$ to denote the non-negative integers.

\smallskip
\noindent
{\bf Organization.}
In Section~\ref{sec:pca}, we formulate the result about finitary codings for limiting distributions of probabilistic cellular automata (Theorem~\ref{thm:main}) and use it to prove Theorem~\ref{thm:ising}, Theorem~\ref{thm:proper-colorings} and Theorem~\ref{thm:high-noise-finitary-coding}. In Section~\ref{sec:stopping-processes}, we introduce an abstract tool (Proposition~\ref{prop:coding-simple} and the more general Proposition~\ref{prop:coding}) and show how to deduce Theorem~\ref{thm:main} from it. In Section~\ref{sec:algo}, we introduce and explain an algorithm, which is then used in Section~\ref{sec:proof} to prove Proposition~\ref{prop:coding}. We end with open questions in Section~\ref{sec:open}.

\smallskip
\noindent
{\bf Acknowledgments.}
I would like to thank Nishant Chandgotia, Peleg Michaeli, Ron Peled and Jeff Steif for useful discussions and comments, and Matan Harel for help in proving Lemma~\ref{lem:stretched-exp-decay}. I am also grateful to the anonymous referee for suggestions which greatly improved the presentation.

\section{Finitary codings for limiting distributions of PCAs}
\label{sec:pca}

The goal of this section is to define the notion of a \emph{probabilistic cellular automaton (PCA)} and other relevant notions, formulate a general result about finitary codings for limiting distributions of PCAs (Theorem~\ref{thm:main} below), and then use this theorem to deduce the results stated in Section~\ref{sec:introduction}.

Before doing so, we give an informal description of the relevant ideas and concepts in the case of the Ising model: Consider the continuous-time Glauber dynamics for the sub-critical Ising model -- each vertex has an exponential clock (with rate 1), and when its clock rings, it updates its spin value according to the conditional distribution given by the values of its neighbors as in~\eqref{eq:ising-def}. This is an ergodic process, whose unique stationary measure is $\mu$ (of Theorem~\ref{thm:ising}), and thus, the distribution at time $t$ converges to $\mu$ as $t \to \infty$, regardless of the initial configuration. As we are interested in finding a coding from a finite-valued process, we instead opt to use a discrete analogue of these dynamics, given by a PCA: at each discrete time step $n$, every vertex is independently set to active or inactive with some fixed probability, and every active vertex which has no active neighbors then updates its spin value as before.
This too is an ergodic process and the distribution at time $n$ converges to $\mu$ as $n \to \infty$. Convergence alone is not sufficient to obtain a coding of $\mu$, as the latter requires an exact sample from $\mu$. To get such a sample, one can employ the coupling-from-the-past technique of Propp and Wilson~\cite{propp1996exact}. This then yields a finitary coding for $\mu$ from an infinite-valued \iid\ process (showing that $\mu$ is \ffiid). Using a result of Martinelli and Olivieri~\cite{martinelli1994approach} that the convergence of the above process to stationarity occurs at an exponential rate, one may further show that this coding has a coding radius with exponential tails (showing that $\mu$ is \ffiid\ with exponential tails). To get from this a (finitary) coding from a \emph{finite-valued} \iid\ process, still requires quite some work. All the above, including this last step, has been carried out by van den Berg and Steif~\cite{van1999existence}. Thus, they showed that $\mu$ is \fvffiid. However, they gave no information on the coding radius beyond its almost sure finiteness. Our main contribution is to show how one can carry out this last step in a controlled manner which preserves the good tails of the coding radius (yielding stretched-exponential tails). We elaborate on this in the next sections.

The above includes general arguments about certain dynamics, along with some model-specific information. Indeed, van den Berg and Steif separated the two parts of the argument, and proved the more general result~\cite[Theorem~3.4]{van1999existence} that the limiting distribution of a monotone, exponentially ergodic PCA is \fvffiid.
In order to accommodate for the different situations considered in Section~\ref{sec:introduction}, which include non-monotone models (proper colorings and high-noise Markov random fields), we work here in the more general setting of exponentially uniformly ergodic PCAs (instead of monotone, exponentially ergodic PCAs), defined below. The proof of~\cite[Theorem~3.4]{van1999existence} may be extended to this setting to show that the limiting distribution of an exponentially uniformly ergodic PCA is \fvffiid.
As mentioned before, the main challenge, and our primary contribution, is to show that this can be done while simultaneously controlling the coding radius.

\begin{theorem}\label{thm:main}
The limiting distribution of an exponentially uniformly ergodic PCA is \fvffiid\ with stretched-exponential tails.
\end{theorem}

The results of Section~\ref{sec:introduction} will follow from Theorem~\ref{thm:main} by showing that the corresponding measures are limiting distributions of exponentially uniformly ergodic PCAs. Theorem~\ref{thm:main} will be proved in Section~\ref{sec:stopping-processes}.
Let us also mention the following result which will easily follow from our definition of an exponentially uniformly ergodic PCA (unlike Theorem~\ref{thm:main} which requires work).

\begin{theorem}\label{thm:main0}
The limiting distribution of an exponentially uniformly ergodic PCA is \ffiid\ with exponential tails.
\end{theorem}

We emphasize the differences between the two theorems: the second gives a finitary coding with exponential tails but does not provide any control on the \iid\ process, while the first gives a coding from a finite-valued \iid\ process but does slightly worse in terms of the tails of the coding radius.

\medskip

Let us now proceed to give precise definitions. We begin by defining what a \emph{PCA} is.
For our purposes, a PCA is a discrete-time evolution on $S^{\Z^d}$ for some non-empty finite set $S$, which can be described as follows.
Let $(W_{v,i})_{v \in \Z^d, i \in \Z}$ be a collection of \iid\ random variables taking values in a finite set~$A$. Let $F,F' \subset \Z^d$ be finite and let $f \colon S^F \times A^{F'} \to S$. The time evolution started from $\xi \in S^{\Z^d}$ is the process $(\omega_{v,i})_{v \in \Z^d,i \ge 0}$ defined by
\begin{equation}\label{eq:time-evolution}
\begin{aligned}
	\omega_{v,0} &:= \xi_v, &&v \in \Z^d ,\\
	\omega_{v,i+1} &:= f\big( (\omega_{v+u,i})_{u \in F}, (W_{v+u,i})_{u \in F'} \big) , &&v \in \Z^d,~i \ge 0.
\end{aligned}
\end{equation}
We stress that different choices of $W_{v,i}$ and $f$ could give rise to the same time evolutions (i.e., the same distribution), however, for our purposes, a PCA is the data of the distribution of the $W_{v,i}$, the sets $F$ and $F'$ and the function $f$. In particular, we note that a PCA comes equipped with a simultaneous coupling of the time evolutions started from all starting states $\xi$. We remark that the usual definition of a PCA requires that $F'=\{\zero\}$, in which case, conditioned on $\{\omega_{v,i}\}_v$, the random variables $\{\omega_{v,i+1}\}_v$ are mutually independent. For the above approach to the construction of finitary codings, we will have $F=F'=N(\zero) \cup \{\zero\}$ (recall that $N(\zero)$ is the neighborhood of the origin), in which case, there are local conditional dependencies. 

A PCA is said to be \emph{ergodic} if there exists a probability measure $\mu$ on $S^{\Z^d}$ such that, for any starting state $\xi$, the distribution of $(\omega_{v,i})_{v \in \Z^d}$ converges weakly to $\mu$ as $i \to \infty$.
An ergodic PCA converging to $\mu$ can be used to obtain an approximate sample from $\mu|_\Lambda$, the marginal of $\mu$ on a finite subset $\Lambda$ of $\Z^d$, by running the time evolution of the PCA until some large time $t$ and observing the restricted process $(\omega_{v,t})_{v \in \Lambda}$ at that time, noting also that the latter is determined by a finite collection of random variables, namely,
\begin{equation}\label{eq:time-evolution-determined}
(\omega_{v,t})_{v \in \Lambda}\text{ is determined by $\xi$ and }\{W_{v,t-i}\}_{\dist(v,\Lambda) \le \Delta i, 1 \le i \le t} , \qquad\text{where }\Delta := \max_{u \in F \cup F'} \|u\|_1 .
\end{equation}
As is usual in these situations, determining how large $t$ should be in order to obtain a sample whose distribution is close to the limiting distribution, is not an easy task.

One way around this is to devise a method to exactly sample from the limiting distribution. Coupling-from-the-past provides such a method, at the cost, however, of requiring a type of uniform ergodicity. To define this notion, we first extend the definition given in~\eqref{eq:time-evolution} of the time evolution of the PCA to allow starting at any integer time as follows.
The time evolution started from $\xi \in S^{\Z^d}$ at time $i_0 \in \Z$ is the process $(\omega_{v,i}^{\xi,i_0})_{v \in \Z^d, i \ge i_0}$ defined by
\begin{equation}\label{eq:time-evolution-past}
\begin{aligned}
	\omega_{v,i_0}^{\xi,i_0} &:= \xi_v, &&\quad v \in \Z^d ,\\
	\omega_{v,i+1}^{\xi,i_0} &:= f\big( (\omega_{v+u,i}^{\xi,i_0})_{u \in F}, (W_{v+u,i})_{u \in F'} \big) , &&\quad v \in \Z^d,~i \ge i_0.
\end{aligned}
\end{equation}
We say that an ergodic PCA is \emph{uniformly ergodic} if
\begin{equation}\label{eq:coupling-from-the-past-stopping-time}
\tau_v := \min \big\{ i \ge 0 : \omega^{\xi,-i}_{v,0}\text{ does not depend on the starting state }\xi \big\}
\end{equation}
is almost surely finite for all $v$.
We remind the reader that in our definitions, a PCA always comes equipped with a function $f$, so that the notion of uniform ergodicity depends on this $f$. While this might not be the standard notion of uniform ergodicity, it will be the relevant one for us. We also remark that for monotone PCAs, ergodicity implies uniform ergodicity (see~\cite[Lemma~3.5]{van1999existence}).
We say that an ergodic PCA is \emph{exponentially uniformly ergodic} if $\tau_v$ has exponential tails.
For a uniformly ergodic PCA, we define the random field $\omega^* = (\omega^*_v)_{v \in \Z^d}$ by
\begin{equation}\label{eq:coupling-from-the-past-process}
\omega^*_v := \omega^{\xi,-\tau_v}_{v,0} ,\qquad v \in \Z^d ,
\end{equation}
noting that this is almost surely well-defined and does not depend on $\xi$. We point out that while earlier we needed $W_{v,i}$ with $i \ge 0$, for~\eqref{eq:coupling-from-the-past-stopping-time} and~\eqref{eq:coupling-from-the-past-process} we use $W_{v,i}$ with $i<0$.

The following proposition encompasses the essence of coupling-from-the-past (in its infinite-volume version). An analogous statement for monotone ergodic PCAs was shown in~\cite{van1999existence} and a similar statement for PCAs arising from high-noise Markov random fields was shown in~\cite{haggstrom2000propp}. The proofs of these statements are easily adapted to the setting described here, and we include a short proof for completeness.

\begin{proposition}\label{prop:coupling-from-the-past}
	Suppose $\mu$ is the limiting distribution of a uniformly ergodic PCA having time evolution $\omega$. Then $\omega^*$ has distribution $\mu$.
\end{proposition}
\begin{proof}
	Fix a finite $\Lambda \subset \Z^d$ and denote $\Omega^t := (\omega^{\xi,-t}_{v,0})_{v \in \Lambda}$.
	Note that, since $\omega^{\xi,-\tau_v}_{v,0}=\omega^*_v$ for all $\xi$, it follows from~\eqref{eq:time-evolution-past} that $\omega^{\xi,-t}_{v,0}=\omega^*_v$ for all $t \ge \tau_v$.
	In particular, $\Omega^t = \omega^*|_\Lambda$ for all $t \ge \max_{v \in \Lambda} \tau_v$. Since $(\omega^{\xi,-t}_{v,0})_{v \in \Z^d}$ and $(\omega^{\xi,0}_{v,t})_{v \in \Z^d}$ are identical in distribution for any $t \ge 0$, it follows that $\Omega^t$ converges to $\mu|_\Lambda$ in distribution as $t \to \infty$. On the other hand, as we have seen that $\Omega^t$ eventually equals $\omega^*|_\Lambda$, we conclude that $\omega^*|_\Lambda$ has distribution $\mu|_\Lambda$. Since $\Lambda$ was arbitrary, the proposition follows.
\end{proof}

The proposition implies that the limiting distribution of a uniformly ergodic PCA is \ffiid. Indeed, a moment of thought reveals that~\eqref{eq:time-evolution-past}-\eqref{eq:coupling-from-the-past-process} describe such a finitary coding from the process $((W_{v,i})_{i<0})_{v \in \Z^d}$. Moreover, if the PCA is exponentially uniformly ergodic, then the coding radius of this coding has exponential tails, so that the limiting distribution is in fact \ffiid\ with exponential tails. This establishes Theorem~\ref{thm:main0}. The point is, however, that this coding is not from a finite-valued process. Restricting the \iid\ process to be finite-valued, while keeping control of the coding radius, is the missing step in order to establish Theorem~\ref{thm:main} and is what most of the remainder of the paper is devoted to.
Before coming back to this in the next section, we explain how to deduce the results of Section~\ref{sec:introduction} from Theorem~\ref{thm:main}.

We now prove Theorem~\ref{thm:ising}, Theorem~\ref{thm:proper-colorings} and Theorem~\ref{thm:high-noise-finitary-coding}. In light of Theorem~\ref{thm:main}, this boils down to showing that in each case the corresponding measure is the limiting distribution of an exponentially uniformly ergodic PCA.

\subsection{The Ising model -- proof of Theorem~\ref{thm:ising}}
To deduce from Theorem~\ref{thm:main} that the sub-critical Ising measure is \fvffiid\ with stretched-exponential tails, we must know that it is the limiting distribution of an exponentially uniformly ergodic PCA. This was shown by van den Berg and Steif (see the proof of Theorem~4.1 in~\cite{van1999existence}) who relied on a deep result of Martinelli and Olivieri~\cite{martinelli1994approach} about the continuous-time Glauber dynamics for the Ising model (see also \cite[Proposition~4.2]{van1999existence}).

\begin{proposition}[\cite{van1999existence}]\label{prop:ising-exponentially-ergodic}
	Let $d \ge 2$ and let $\mu$ be the unique Gibbs measure for the Ising model on $\Z^d$ at inverse temperature $\beta<\beta_c(d)$. Then $\mu$ is the limiting distribution of an exponentially uniformly ergodic PCA.
\end{proposition}

Theorem~\ref{thm:ising} follows immediately from Theorem~\ref{thm:main} and Proposition~\ref{prop:ising-exponentially-ergodic}.

Although for our purposes, we only need to know that a PCA as in Proposition~\ref{prop:ising-exponentially-ergodic} exists and its details are not important for us, in order to provide the reader with a full picture for the case of the Ising model, we nevertheless give a complete and formal description of the PCA used in the proof of Proposition~\ref{prop:ising-exponentially-ergodic} (but written in a slightly different way than in~\cite{van1999existence}). In fact, we have already given an informal description of this PCA in the beginning of Section~\ref{sec:pca}.
To define it precisely, set $S:=\{-1,1\}$, $A:=\{0,1\} \times \{-2d,\dots,2d+1\}$, $F=F':= N(\zero) \cup \{\zero\}$ and define $f \colon S^F \times A^F \to S$ by
\[ f(\eta, (\phi,\psi)) := \begin{cases}
2 \cdot \1_{\big\{\psi_\zero \le \sum_{u \in N(\zero)} \eta_u \big\}} - 1 &\text{if }\phi_\zero=1\text{ and }\phi_v=0\text{ for all }v \in N(\zero)\\
\eta_0 &\text{otherwise}
\end{cases}.\]
To complete the description of the PCA, we must also describe the distribution of the \iid\ random variables $(W_{v,i})_{v \in \Z^d, i \in \Z}$. We let each $W_{v,i}$ consist of a pair of independent random variables, the first of which is a Bernoulli random variable with parameter, say, $1/2$, and the second of which has the distribution of $\mathcal{W}$, where $\mathcal{W}$ takes values in $\{-2d,\dots,2d+1\}$ and satisfies
\[ \Pr(\mathcal{W} \le k) = p_k := \frac{e^{\beta k}}{e^{\beta k} + e^{-\beta k}} \qquad\text{for }-2d \le k \le 2d .\]
Observe that such a random variable exists since $(p_k)_{-2d \le k \le 2d}$ is increasing.
Recalling~\eqref{eq:ising-def}, one may easily verify that any Gibbs measure for the Ising model on $\Z^d$ at inverse temperature $\beta$ is a stationary measure for this PCA.

We note that this PCA is monotonic in the sense that $f(\eta, (\phi,\psi)) \le f(\eta', (\phi,\psi))$ for any $(\phi,\psi)$ and $(\eta,\eta')$ such that $\eta_v \le \eta'_v$ for all $v \in F$, and we remark that due to this monotonicity, Proposition~\ref{prop:ising-exponentially-ergodic} is essentially a statement about the probability that the value of the spin at the origin after time $t$ depends on whether the starting state is the constant plus or constant minus state -- namely, that this probability is exponentially small in $t$.

\subsection{High-noise Markov random fields -- proof of Theorem~\ref{thm:high-noise-finitary-coding}}

To deduce Theorem~\ref{thm:high-noise-finitary-coding} from Theorem~\ref{thm:main}, we need to know that a translation-invariant high-noise Markov random field is the limiting distribution of an exponentially uniformly ergodic PCA. This was shown by H{\"a}ggstr{\"o}m and Steif in~\cite{haggstrom2000propp} (essentially Proposition~2.1 there).

\begin{proposition}[\cite{haggstrom2000propp}]\label{thm:high-noise-PCA}
Let $\mu$ be a translation-invariant Markov random field satisfying high-noise. Then $\mu$ is the limiting distribution of an exponentially uniformly ergodic PCA.
\end{proposition}

Given this proposition, Theorem~\ref{thm:high-noise-finitary-coding} is an immediate corollary of Theorem~\ref{thm:main}.

\subsection{Proper colorings -- proof of Theorem~\ref{thm:proper-colorings}}
Theorem~\ref{thm:proper-colorings} will follow from Theorem~\ref{thm:main} once we establish the following.

\begin{proposition}\label{thm:proper-colorings-PCA}
Let $d \ge 2$ and $q \ge 4d(d+1)$. Let $\mu$ be the unique Gibbs measure for proper $q$-colorings of $\Z^d$. Then $\mu$ is the limiting distribution of an exponentially uniformly ergodic PCA.
\end{proposition}

\begin{proof}
The proof uses ideas of Huber~\cite{huber1998exact,huber2004perfect} for exact sampling of proper colorings on finite graphs and ideas of H{\"a}ggstr{\"o}m and Steif~\cite{haggstrom2000propp} from the proof of Proposition~\ref{thm:high-noise-PCA}.
The proof of the latter proposition uses an auxiliary PCA on a larger space (which the authors there call a super-PCA), which ``bounds'' the original PCA simultaneously for all starting states, and thus allows to ``detect'' when the original PCA has coalesced.
Huber used a similar idea (which he called bounding chains), together with model-specific arguments, to provide an exact sampling algorithm for proper colorings (and other models) on a finite graph.
Putting these ideas together, we show how this can be done for proper colorings of $\Z^d$.

We first describe the PCA in words: at each time step, every vertex is independently set to active or inactive with some fixed probability, and every active vertex which has no active neighbors then updates its color to be uniformly chosen from the set of colors not appearing at any of its neighbors. More precisely, a uniform permutation of the colors is chosen, and the first color not appearing at any neighbor is chosen.
This PCA may be realized as follows. Let $S := \{1,\dots,q\}$ and let $\cS_q$ be the symmetric group on $S$. Let $F=F':= N(\zero) \cup \{\zero\}$, $A := \{0,1\} \times \cS_q$ and define $f \colon S^F \times A^F \to S$ by
\[ f(\eta, (\phi,\psi)) := \begin{cases}
 g(\{\eta_v\}_{v \in N(\zero)}, \psi_\zero) &\text{if }\phi_\zero=1\text{ and }\phi_v=0\text{ for all }v \in N(\zero)\\
 \eta_\zero &\text{otherwise}
 \end{cases},\]
where $g \colon 2^S \times \cS_q \to S$ is defined by
\[ g(D,\pi) := \pi(\min\{ i \in S : \pi(i) \notin D \}) .\]
The time evolution $\omega$ of this PCA is then given by~\eqref{eq:time-evolution-past}, where the \iid\ random variables $(W_{v,i})$ are chosen to be uniformly distributed over $A$, so that each $W_{v,i}$ represents an unbiased coin toss (the unbiasedness will not be important for us) and an independent uniformly chosen permutation of the colors.
It is straightforward to check that any Gibbs measure for proper $q$-colorings is a stationary distribution for this PCA.

To show that this PCA is exponentially uniformly ergodic, we use the method of bounding chains discussed above. Consider the following PCA (or super-PCA in the language of~\cite{haggstrom2000propp}) on $(2^S)^{\Z^d}$ given by $\hat{f} \colon (2^S)^F \times A^F \to 2^S$, where
\[ \hat{f}(\hat{\eta}, (\phi,\psi)) := \begin{cases}
\hat{g}(\hat{\eta}|_{N(\zero)}, \psi_\zero) &\text{if }\phi_\zero=1\text{ and }\phi_v=0\text{ for all }v \in N(\zero)\\
\hat{\eta}_\zero &\text{otherwise}
\end{cases},\]
where $\hat{g} \colon (2^S)^{N(\zero)} \times \cS_q \to S$ is defined by
\[ \hat{g}(\hat{\eta},\pi) := \bigcup_{\substack{\eta \in S^{N(\zero)}:\\\eta_v \in \hat{\eta}_v~\forall v \in N(\zero)}} g(\{\eta_v\}_{v \in N(\zero)},\pi) .\]
The time evolution $\hat{\omega}$ of this PCA is then defined as in~\eqref{eq:time-evolution-past}, using the same random variables $(W_{v,i})$ as above, so that the two PCAs are coupled, with the crucial property that $\hat{\omega}$ bounds $\omega$ in the sense that
\[ \omega^{\xi,i}_{v,j} \in \hat{\omega}^{\hat{\xi},i}_{v,j} \qquad\text{for any $\xi \in S^{\Z^d}$, $v \in \Z^d$ and $i \le j$}, \]
where $\hat{\xi}$ is the maximal element in $(2^S)^{\Z^d}$ defined by $\hat{\xi}_v := 2^S$ for all $v \in \Z^d$.
In particular, recalling~\eqref{eq:coupling-from-the-past-stopping-time}, we have
\[ \tau_v \le \hat{\tau}_v = \min \big\{ i \ge 0 : |\hat{\omega}^{\hat{\xi},-i}_{v,0}|=1 \big\} .\]
It therefore suffices to show that $\hat{\tau}_v$ has exponential tails.

We begin by observing that, for $t \ge 0$,
\[ \Pr(\hat{\tau}_v > t) \le \Pr\Big(|\hat{\omega}^{\hat{\xi},-t}_{v,0}|>1\Big) = \Pr\Big(|\hat{\omega}^{\hat{\xi},0}_{v,t}|>1\Big) =: p_t ,\]
where $p_t$ is of course independent of $v$.
To ease notation, let us denote $Y_t(v) := \hat{\omega}^{\hat{\xi},0}_{v,t}$.
Let $\alpha$ denote the probability that a vertex is updated in any given time step, i.e., $\alpha=\beta(1-\beta)^{2d}$, where $\beta$ is the probability that a vertex is activated (for an unbiased coin toss, we have $\beta=1/2$ and $\alpha=2^{-2d-1}$, but this will not be used). Note that $|\hat{g}(\hat{\eta},\pi)| \le 2d+1$ for any $\hat{\eta} \in (2^S)^{N(\zero)}$ and $\pi \in \cS_q$. Hence,
\begin{equation}\label{eq:proper-colorings-at-most-2d+1}
\Pr(|Y_t(u)| > 2d+1) = (1-\alpha)^t \quad\text{for any $u \in \Z^d$ and $t \ge 0$,}
\end{equation}
since $|Y_t(u)| > 2d+1$ if and only if $u$ has never been updated by time $t$.

Let us see what happens when the origin is updated.
Let $D := \bigcup_{u \in N(\zero)} Y_t(u)$ be the set of colors which may appear in some neighbor of $\zero$, and let $D' := \bigcup_{u \in N(\zero), |Y_t(u)|=1} Y_t(u)$ be those colors which are known to appear in some neighbor of $\zero$. Observe that if $\pi \in \cS_q$ is such that $g(D',\pi) \notin D$, then $g(D,\pi)=g(D',\pi)$ and $\hat{g}((Y_t)|_{N(\zero)},\pi)=\{g(D',\pi)\}$. Thus, given $Y_t$ and given that $\zero$ is updated at time $t+1$, the probability that $|Y_{t+1}(\zero)|>1$ is at most the probability that the $g(D',\pi) \in D$. When $\pi \in \cS_q$ is chosen uniformly, $g(D',\pi)$ is uniformly distributed in $S \setminus D'$, so that the latter probability is $\frac{|D \setminus D'|}{q-|D'|} \le \frac{\sum_{u \in N(\zero)} |Y_t(u)|\1_{\{|Y_t(u)|>1\}}}{q-2d+1}$.
This shows that
\[ \Pr(|Y_{t+1}(\zero)|>1 \mid Y_t) \le (1-\alpha) \1_{\{|Y_t(\zero)|>1\}} + \alpha \sum_{u \in N(\zero)} \frac{|Y_t(u)|\1_{\{|Y_t(u)|>1\}}}{q-2d+1} .\]
Together with~\eqref{eq:proper-colorings-at-most-2d+1}, this yields
\[ p_{t+1} = \E \big[ \Pr(|Y_{t+1}(\zero)|>1 \mid Y_t) \big] \le 2d (1-\alpha)^t + \left(1 - \alpha\left(1-\frac{2d(2d+1)}{q-2d+1}\right) \right) p_t .\]
Thus, $p_t$ decays exponentially in $t$ when $q \ge 4d(d+1)$, and Proposition~\ref{thm:proper-colorings-PCA} follows.
\end{proof}

\section{A general result and proof of Theorem~\ref{thm:main}}
\label{sec:stopping-processes}

In this section, we introduce a general result which will allow us to deduce Theorem~\ref{thm:main}. This result is an abstract tool and is not, a priori, related to the problems originally discussed in Section~\ref{sec:introduction}.

Let $X=(X_{v,i})_{v \in \Z^d, i \ge 0}$ be a process taking values in a finite set $S$.
Let $B=(B_n)_{n \ge 0}$ be a strictly increasing sequence of subsets of $\Z^d \times \N$ with $B_0:=\{(\zero,0)\}$, and consider the associated $\sigma$-algebras $\{\cF^n_v\}_{v \in \Z^d, n \ge 0}$ defined by
\begin{equation}\label{eq:F_n}
	\cF^n_v := \sigma\big(\{X_{v+u,i}\}_{(u,i) \in B_n}\big) .
\end{equation}
An $\N$-valued random field $\tau=(\tau_v)_{v \in \Z^d}$ is said to be a \emph{$B$-stopping-process} for $X$ if, for every $v$, $\tau_v$ is an almost surely finite stopping time with respect to the filtration $(\cF^n_v)_{n \ge 0}$.
When we say that such a stopping-process is stationary, we shall mean that the same stopping rule is used at every vertex (rather than just meaning that its law is translation-invariant).
Given a $B$-stopping-process~$\tau$, we denote by $X^\tau$ the random field
\[ X^\tau := \big((X_{v+u,i})_{(u,i) \in B_{\tau_v}}\big)_{v \in \Z^d} .\]
Note that $(X^\tau)_v$ takes values in the finite-configuration space $\bigcup_{n \ge 0} S^{B_n}$.
We say that $B$ is \emph{linear} if
\begin{equation}\label{eq:linear-stopping-process}
\Delta_n := \max \big\{ \max\{|u|,i\} : (u,i) \in B_n\big\} \le \Delta n \qquad\text{for some $\Delta \ge 1$ and all $n \ge 0$.}
\end{equation}

\begin{proposition}\label{prop:coding-simple}
Let $X=(X_{v,i})_{v \in \Z^d, i \ge 0}$ be a finite-valued \iid\ process, let $B$ be linear and let~$\tau$ be a stationary $B$-stopping-process for $X$. Suppose $\tau_v$ has exponential tails and $\E |B_{\tau_v}| < M$ for some integer $M$. Then $X^\tau$ is a finitary factor of $((X_{v,i})_{0 \le i < M})_{v \in \Z^d}$ with stretched-exponential tails.
\end{proposition}

Before using Proposition~\ref{prop:coding-simple} to prove Theorem~\ref{thm:main}, we briefly explain the proposition and how it relates to the setting of the theorem.
Recall that, given a uniformly ergodic PCA, \eqref{eq:time-evolution-past}-\eqref{eq:coupling-from-the-past-process} explicitly express the random field $\omega^*$ as a finitary factor of the \iid\ process $((W_{v,i})_{i<0})_{v \in \Z^d}$, defined via certain stopping times. Moreover, it is clear from this and from~\eqref{eq:time-evolution-determined} that the value of the output $\omega^*_u$ for any given $u$ depends only on the variables $W_{v,i}$ within a certain ``cone'' in space-time emanating from $(u,0)$ (this is because as one goes back in time, the spatial dependency grows linearly). The above setup generalizes this situation to an abstract setting (which has nothing to do with coupling-from-the-past or PCAs), where the sequence $(B_n)_n$ replaces the cones arising from~\eqref{eq:time-evolution-determined}, the stopping process replaces the coupling-from-the-past stopping times given in~\eqref{eq:coupling-from-the-past-stopping-time}, and the variables $(W_{v,i})_{v \in \Z^d,i<0}$ are now called $(X_{v,i})_{v \in \Z^d,i \ge 0}$. With this interpretation in mind, for any given $u$, we may think of $(X^\tau)_u$ as containing all the variables that are ``needed'' for the computation of the output at $u$, and the proposition states (ignoring the tails of $\tau_v$ and the coding) that if, on average, the number of variables needed to compute the output at a given vertex is less than $M$, then one can ``emulate'' the process $X^\tau$ (consisting of all the needed variables) from a process which has precisely $M$ variables at each vertex. In other words, if one has an algorithm which can a priori need access to any number of variables at a given vertex, but typically does not need many such variables, then by ``transporting'' variables from one space-time location to another as needed, it is possible to rewrite the algorithm in such a way that it only has access to a bounded number of variables at each vertex.
We note that we continue to refer to $\Z^d \times \N$ as space-time, although the interpretation of $\N$ as a time dimension is perhaps less proper.

\begin{proof}[Proof of Theorem~\ref{thm:main}]
	Suppose that $\mu$ is the limiting distribution of an exponentially uniformly ergodic PCA with time evolution $\omega$, defined via variables $(W_{v,i})$, sets $F$ and $F'$, and function $f$. By Proposition~\ref{prop:coupling-from-the-past}, it suffices to show that $\omega^*$, defined by~\eqref{eq:coupling-from-the-past-process}, is \fvffiid\ with stretched-exponential tails.

Recall the definition of $\tau_v$ from~\eqref{eq:coupling-from-the-past-stopping-time} and the definition of $\Delta$ from~\eqref{eq:time-evolution-determined}.
By definition of $\tau_v$ and~\eqref{eq:time-evolution-determined} (or rather the analogue of~\eqref{eq:time-evolution-determined} for the time evolution started at time $-t$ and run up to time 0), the value of $\omega^*_v$ is a deterministic function of the variables $(W_{v+u,-i})_{|u| \le \Delta i, 0 \le i \le \tau_v}$ (actually, the variable $W_{v,0}$ corresponding to $i=0$ is not needed, but we include it nevertheless). Moreover, this function does not depend on $v$, in the sense that, for some deterministic function $\psi$, we have that $\omega^*_v = \psi((W_{v+u,-i})_{|u| \le \Delta i, 0 \le i \le \tau_v})$ for all $v$.

Towards applying Proposition~\ref{prop:coding-simple}, define $B=(B_n)_{n \ge 0}$ by $B_n := \{ (u,i) : |u| \le \Delta i,\, 0 \le i \le n \}$ and define the \iid\ process $X=(X_{v,i})_{v\in\Z^d,i \ge 0}$ by $X_{v,i} := W_{v,-i}$. Note that $\tau=(\tau_v)_{v \in \Z^d}$ is a linear stationary $B$-stopping-process for $X$, and that $\omega^*_v = \psi((X^\tau)_v)$ so that $\omega^*$ is a finitary factor of $X^\tau$ with coding radius 0. It therefore suffices to show that $X^\tau$ is \fvffiid\ with stretched-exponential tails.
Indeed, letting $M$ be any integer larger than $\E |B_{\tau_v}|$, Proposition~\ref{prop:coding-simple} yields that $X^\tau$ is a finitary factor of $((X_{v,i})_{0 \le i < M})_{v \in \Z^d}$ with stretched-exponential tails. Since $((X_{v,i})_{0 \le i < M})_{v \in \Z^d}$ is a finite-valued \iid\ process, this yields the required coding for $\omega^*$.
\end{proof}

Our method of proof of Proposition~\ref{prop:coding-simple} gives a slightly stronger result. We call $\sigma$ a \emph{simple stopping-process} if it is a $B^*$-stopping-process, where $B^*$ is defined by $B^*_n:=\{\zero\} \times \{0,1,\dots,n\}$. In this case, $X^\sigma$ can unambiguously be thought of as $(X_{v,i})_{v \in \Z^d, 0 \le i \le \sigma_v}$.

\begin{proposition}\label{prop:coding}
Let $X=(X_{v,i})_{v \in \Z^d, i \ge 0}$ be a finite-valued \iid\ process, let $B$ be linear, let $\tau$ be a stationary $B$-stopping-process for $X$ and $\sigma$ a stationary simple stopping-process for $X$. Suppose $\tau_v$ has exponential tails and $\E |B_{\tau_v}| < \E \sigma_v+1$. Then $X^\tau$ is a finitary factor of $X^\sigma$ with stretched-exponential tails.
\end{proposition}

Note that, since $\sigma$ is simple, the condition $\E |B_{\tau_v}| < \E \sigma_v+1$ may be more naturally written as $\E |B_{\tau_v}| < \E |B^*_{\sigma_v}|$.
Proposition~\ref{prop:coding-simple} is the special case of Proposition~\ref{prop:coding} in which $\sigma$ is taken to be the deterministic simple stopping-process given by $\sigma_v=M-1$ for all $v$. The rest of the paper is devoted to the proof of Proposition~\ref{prop:coding}.

\begin{remark}
	One may make slight modifications to the proof of the proposition to obtain various improvements. For instance, the same conclusion holds under the weaker assumptions that $\tau_v$ has only stretched-exponential tails and that $\Delta_n$ grows polynomially fast in $n$. In fact, one could even allow somewhat heavier tails and faster growing $\Delta_n$ at the expense of obtaining a coding radius with heavier tails. This is true even to the extent that, with no assumptions on the tails of $\tau_v$ or on the growth of $\Delta_n$, the conclusion still holds albeit with no information on the coding radius. On the other hand, under the stronger assumption that $\tau$ is also a simple stopping-process, the coding radius can be shown to have exponential tails.
\end{remark}

\begin{remark}
Proposition~\ref{prop:coding} holds also for \emph{random} simple stopping-processes~$\sigma$ (though we do not allow randomness in the $B$-stopping-process $\tau$), provided the randomness is made independent for each vertex in the following sense: There exists an \iid\ process $X'=(X'_v)_{v \in \Z^d}$, independent of~$X$, such that, for each $v$, $\sigma_v$ is an almost surely finite stopping time with respect to the filtration $(\cF^n_v \vee \sigma(X'_v))_{n \ge 0}$, where $\cF^n_v \vee \sigma(X'_v)$ is the smallest $\sigma$-algebra containing $\cF^n_v$ and the one generated by $X'_v$. By working conditionally on $X'$, the proofs go through essentially unchanged.
\end{remark}

\section{The algorithm}\label{sec:algo}

In this section, we provide the algorithm used to construct the finitary coding stated in Proposition~\ref{prop:coding}. We then use it in Section~\ref{sec:proof} to prove the proposition.

Throughout this section, we work in the setting of Proposition~\ref{prop:coding} so that $X=(X_{v,i})_{v \in \Z^d, i \ge 0}$ is an \iid\ process taking values in a finite set $S$, $B$ is linear, $\tau$ is a stationary $B$-stopping-process for $X$ and $\sigma$ is a stationary simple stopping-process for $X$. In addition, $\tau_v$ has exponential tails and $\E |B_{\tau_v}| < \E \sigma_v+1$.
We may also assume without loss of generality that $\sigma_v$ is bounded.

We construct an algorithm which, given a realization $\mathcal{Y}$ of the ``source'' process $X^\sigma$, deterministicly computes an output $\cZ$ having the distribution of the ``target'' process $X^\tau$. In the special case where $\sigma_v=M-1$ deterministically for all $v$ (as in Proposition~\ref{prop:coding-simple}), we could imagine that there is a single space-time landscape, initially containing variables in the subset $\Z^d \times \{0,1,\dots,M-1\}$ of space-time, and that these variables may be ``transported'' from their original locations to new locations as needed to construct $\cZ$. However, in general, as $\sigma$ is a stopping-process, we do not know which subset of space-time initially contains variables unless we expose some of the variables, but this would bias them and so we could not easily use them to construct $\cZ$.
Thus, instead of revealing the entire random field $\cY$ at once, the algorithm slowly reveals more and more of $\cY$ as is needed to generate more and more of~$\cZ$. As both $\cY$ and $\cZ$ are realizations of stopping-processes, it is convenient to think that the input to the algorithm is in fact a realization of $X$, which the algorithm uses to simultaneously construct both $\cY$ and $\cZ$ in such a manner that the variables of $X$ used to construct $\cZ$ are a subset of those used to construct $\cY$.
We may thus imagine that there are in fact three space-time landscapes: one corresponding to the original process $X$, one to the source process $\cY$, and one to the target process $\cZ$. When the algorithm wishes to reveal an additional piece of $\cY$, the required variable is easily generated -- it is simply read from the same location in the $X$ process. On the other hand, when an additional piece of $\cZ$ needs to be generated, it must be matched to a variable used by $\cY$. Here comes into play the crucial assumption that $\E |B_{\tau_v}| < \E \sigma_v+1$, which ensures that $\cZ$ uses less variables than $\cY$ on average. Thus, from the point of view of $\cZ$, as any variable used by $\cY$ is ``available'' to be used by $\cZ$, there are many available variables (much more than needed) for $\cZ$, and one needs only to find a suitable way of ``transporting'' these from the source to the target.

We call the variables of the source process $\cY$ \emph{inputs} and the variables of the target process $\cZ$ \emph{outputs}.
We stress that transporting a variable from $(u,i)$ to $(w,j)$ simply means that the source location $(u,i)$ and target location $(w,j)$ are matched to one another so that the input $\cY_{u,i}$ and the output $\cZ_{w,j}$ are identified.
Moreover, when we say that an input is generated, say at location $(u,i)$, we simply mean that the corresponding variable $X_{u,i}$ is revealed and identified with $\cY_{u,i}$, and when we say that an output is generated, say at location $(u,i)$, we mean that a suitable input is transported to $(u,i)$.

The algorithm consists of a ``simulator'' for each vertex $v \in \Z^d$, which has an associated \emph{source location} (thought of as a space-time location of $\cY$) and \emph{target location} (thought of as a space-time location of $\cZ$). At each time step $n$, the simulators simultaneously execute a common procedure (this will guarantee that any output of the algorithm is translation-equivariant).
The goal of the $v$-simulator is to ensure that its stopping time $\tau_v$ is reached (with respect to the target process $\cZ$) and that all relevant outputs for $\cZ_v$ (i.e., those corresponding to space-time locations in $v+B_{\tau_v}$) have been generated (that is, to determine an integer $t_v \ge 0$ and a configuration $\xi \in S^{v+B_{t_v}}$ for which $\tau_v(\xi)=t_v$). Once this happens, $v$ will be ``satisfied'', the final output $\cZ_v$ will be known, and the $v$-simulator will remain idle; until then, the $v$-simulator will be in a constant state of searching (in that its source location will change at every time step), trying to find an unused input at the source location which it can transport to the target location.
We note that there is a complex interplay between the different simulators. On the one hand, they are competing for shared resources, namely, the inputs. On the other hand, as different sites $v$ may rely on common outputs in order to compute their final output $\cZ_v$, the simulators may occasionally ``unintentionally help'' each other reach their goals (as long as it helps them too) by generating an output which is also required by another simulator (though we do not exploit this in the proof). This is in fact the origin of some complications, which presumably cannot be avoided.
Our algorithm is inspired partly by the algorithms in~\cite{van1999existence,harvey2006universal} (see Section~\ref{sec:alg-comparison} for a comparison between our algorithm and the one in~\cite{van1999existence}).

\subsection{Informal description of the algorithm}
\label{sec:alg-informal}
The goal of the $v$-simulator is to make sure that the final output $\cZ_v$ becomes known after some finite number of steps. To do this, the $v$-simulator proceeds as follows: Initially, at time step $n=0$, it reveals the variable $X_{v,0}$, which corresponds to the single space-time location in $v+B_0 = \{(v,0)\}$ (see Section~\ref{sec:formal-algo-def} for a formal definition of sets of the form $v+A$). It then consults the stopping rule $\tau_v$ to see whether or not it should continue. If it has reached the stopping time, i.e., $\tau_v = 0$, then the final output is known, namely, $\cZ_v$ is the element in $S^{B_0}$ given by $(\cZ_v)_{(v,0)}=X_{v,0}$, so that the simulator is satisfied and can stop.
If it has not reached the stopping time, i.e., $\tau_v > 0$, its next goal becomes to generate the outputs in $v+(B_1 \setminus B_0)$. Let us come back to how this is done in a moment. Once these have been generated (which may require many steps of the algorithm), the $v$-simulator consults the stopping rule $\tau_v$ again, this time to check whether $\tau_v = 1$. If indeed $\tau_v = 1$, then it is satisfied and the final output is known, namely, $\cZ_v$ is an element in $S^{B_1}$ given by the generated outputs at space-time locations $v+B_1$. If instead $\tau_v>1$, the $v$-simulator continues in a similar manner, with the general rule being that once the $v$-simulator learns that $\tau_v>k$, it continues to generate the outputs in $v + (B_{k+1} \setminus B_k)$, and then to check the stopping rule in order to determine whether or not it should continue. Eventually the stopping time is reached, the final output is known and the $v$-simulator is satisfied.

Let us now explain how the $v$-simulator generates the outputs in $v + (B_k \setminus B_{k-1})$. Firstly, it does so one output at a time (in an arbitrary order), and so we merely focus on how it generates a single output at space-time location $(w,j)$. Of course, one way to do this is simply to use the original variable residing at that location, namely, $X_{w,j}$. However, since we want to obtain a coding from $X^\sigma$, we must be sure to only use inputs (those variables residing in the scope of the source process), i.e., we cannot use $X_{w,j}$ unless $\sigma_w \ge j$. We also cannot use an input if it has already been used (transported away) by some other simulator at a previous time. Thus, we may need to search for an input at a different location $(u,i)$ and transport it from there to $(w,j)$.
Roughly speaking, the simulator moves along the space-time landscape of the source process, checking to see whether there is an unused input which it can transport to the target location $(w,j)$. At every time step, it checks a single source location $(u,i)$. If the input at that location is not available for use, the simulator simply advances its current source location, and does nothing further in that step of the algorithm. This procedure is repeated until the simulator eventually finds an unused input that it can transport. At that time, assuming the required output has not meanwhile been generated by another simulator, it transports it. Either way, the output at $(w,j)$ is sure to have been generated by the end of that step.

Of course, as we are trying to construct a coding, the above procedure must be carried out simultaneously by all the simulators. This leads to some interaction between the different simulators. Let us now give some more specific details about this and the above procedure.
We first explain how the $v$-simulator behaves with regards to the source process in each step:
\begin{itemize}
 \item If the simulator is satisfied, it does nothing. If it is unsatisfied, it will necessarily move its source location and it does so as follows. It first tries to move up one step in the pile of the vertex $u$ it is currently at. If it cannot, i.e., if it is already at the top of an exhausted pile (in the sense that the stopping time $\sigma_u$ has been reached), then it moves to the bottom of the pile located one step to the right of $u$ (i.e., to $u+e_1$). Here we informally refer to the inputs at locations $(u,i)$ as the \emph{pile} at $u$, and think of the pile there as initially empty and then growing as inputs there are generated until it becomes exhausted (i.e., until it reaches its full size given by the stopping time $\sigma_u$). 
 \item The above choice implies that if the $v$-simulator is at the top of a pile which has not yet been exhausted (we shall later call such a pile \emph{loaded}), then the input just above the top of that pile has not yet been used/revealed by any simulator. Thus, it is an unbiased input (having the same distribution as $X_{\zero,0}$) and is available to be transported. In this situation, regardless of whether or not it is indeed transported, the source location is moved one step up the pile.
 \item We initially set the $v$-simulator's source location to be $(v,-1)$ so that it is necessarily at the top of a loaded pile when the algorithm starts.
 \item Let us point out that when the pile sizes are deterministically fixed (as in the situation of Proposition~\ref{prop:coding-simple}), the evolution of the source location is also deterministic (up to knowing at what time the simulator becomes satisfied and stops). However, in general, as $\sigma$ is a simple stopping-process, the evolution is random: to decide whether or not a pile is exhausted, we must inspect the variables in the pile.
 \item We could have chosen different conventions here. Our choice has the advantage that there cannot be more than one unsatisfied simulator at any location at any given time. This means that we do not need to worry about different simulators trying to transport the same input.
\end{itemize} 
Next, we explain how the $v$-simulator behaves with regards to the target process:
\begin{itemize}
  \item If the simulator is satisfied, it does nothing. If it is unsatisfied, it might move its target location and it might not. Specifically, it moves precisely when its source is at the top of a loaded pile. Indeed, when this happens, we are assured that the required output can be generated. Moreover, when it moves, it moves to the next element in $v+B_\infty$, where the elements of $B_\infty=\bigcup_{n \ge 0} B_n$ are ordered in any way which respects the inclusions $B_0 \subset B_1 \subset \cdots$.
  \item Note that the times at which the target location changes is completely determined by the source. In particular, even if the output at the target location has been previously generated by some other simulator, this does not mean that the $v$-simulator will necessarily advance its target location. In other words, the output at the target location may have already been generated, and it may take the simulator many more steps until it finds an unused input (i.e., its source is at the top of a loaded pile), only to realize at that point in time that it does not need it after all (in which case that input will be wasted -- it will not be transported later). This is not the most efficient choice, but it is the one we make.
  \item We point out that, unlike for the source, there may be many different simulators at a given target location at the same time. This situation just means that the different simulators all wish to generate the same output.
  Among these simulators, many may also be at the top of a loaded pile (in the source), which means that they can transport an input. Thus, we must take care that different simulators do not generate the same output. We must therefore prioritize the simulators in some manner. To this end, we simply make the choice that the lexicographical-minimal simulator (among those at the top of a loaded pile) takes priority, namely, it is the one to generate the output, while the others do not transport an input (note that this is again not the most efficient way to do things, since we are throwing away inputs which could have been used later, but this is not too wasteful and we simply made a choice which we found convenient).
\end{itemize}

We emphasize that the algorithm may transport an input away from a certain location at some point in time, and then transport some other input into that same location at a later point in time.
That is, even if eventually there is an input at location $(u,i)$ (in the sense that $\sigma_u \ge i$) and the output at that same location is eventually needed by some simulator (in the sense that $(u,i) \in v+B_{\tau_v}$ for some $v$), there is no guarantee that the variable that will eventually end up to be the output at $(u,i)$ is the one that was originally the input there.
The important property is that any given input can only be transported away once, and any given output can only be generated (i.e., transported into) once. This is another reason it is helpful to imagine separate space-time landscapes for the source and target processes.

\smallskip
We refer the reader to Figure~\ref{fig:algorithm} for an illustration of the algorithm.

\begin{figure}
	\centering
	\captionsetup{width=0.92\textwidth}
	\includegraphics[scale=0.22]{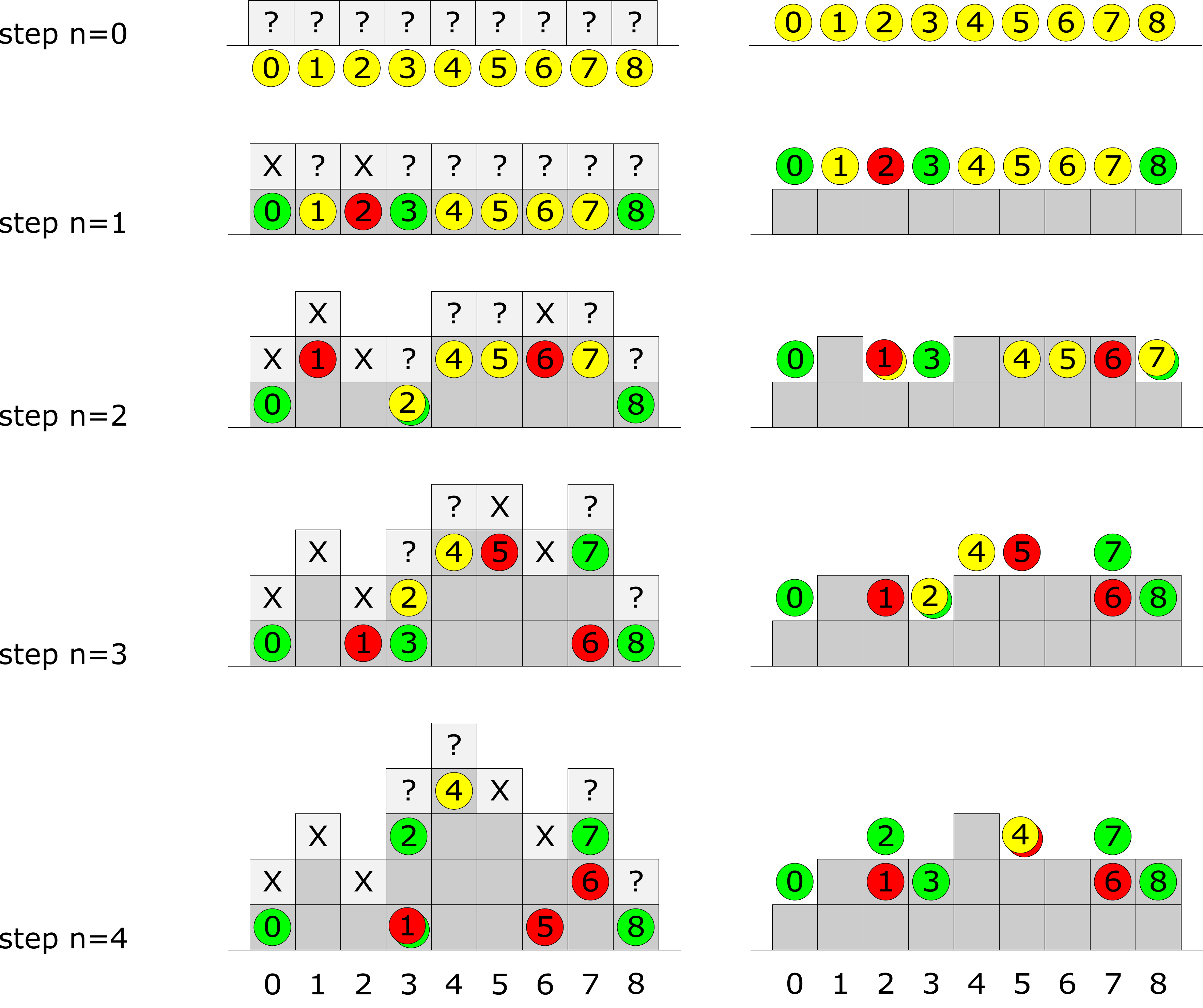}
	\caption{An illustration of the first five steps of the algorithm. For illustration purposes, we consider here the case where $d=1$ and $B_n=\{(u,i) : 0 \le u \le i \le n\}$, ordering the elements of $B_\infty$ as $(0,0),(0,1),(1,1),(0,2),(1,2),(2,2)$ and so on.
 The figure depicts the processes $Y^n$ and $Z^n$ and the state of the simulators at the end of step $n$ for $n=0,\dots,4$. The row just above the horizontal axis corresponds to the portion $\Z \times \{0\}$ of space-time.\vspace{2pt}\\
	\textit{Left:} The source process $Y^n$ and the source locations $(U^n_v,I^n_v)$ of the simulators. A gray background at space-time location $(u,i)$ indicates that the input $Y^n_{u,i}$ has been generated. An~$\times$ indicates an unloaded vertex, while a question mark indicates a loaded vertex.\vspace{2pt}\\
	\textit{Right:} The target process $Z^n$ and the target locations $(W^n_v,J^n_v)$ of the simulators. A gray background at space-time location $(w,j)$ indicates that the output $Z^n_{w,j}$ has been generated.\vspace{2pt}\\
	\textit{Simulators:} The simulators are depicted in green, yellow or red according to whether they are satisfied, unsatisfied but at the top of a loaded pile, or otherwise. A green simulator does not move as it has finished running (case (i) in the algorithm). A yellow simulator advances its source location by moving up one step in its current pile, reads the unused input at that new location, transports this input to its current target location (if needed), and then advances its target location by moving to the ``next place in line'' according to the ordering on $B_\infty$ (case (iv) in the algorithm). We note that when two yellow simulators occupy the same target location, only one of them actually generates the output (i.e., transports an input to that location). A red simulator does not have access to an unused input, and so it advances its source location by either moving up the current pile if it is not yet at the top (case (ii) in the algorithm) or otherwise by moving to the bottom of the next pile (case (iii) in the algorithm), while its target location remains unchanged.
In particular, a red simulator does not advance its target location even if the corresponding output is (or was previously) generated by a different simulator.\\See Section~\ref{sec:figure-explain} for further details about the figure.}
	\label{fig:algorithm}
\end{figure}

\subsection{Further explanation of the figure}
\label{sec:figure-explain}
Figure~\ref{fig:algorithm} illustrates the first several steps of the algorithm.
The figure contains a detailed caption, and here we provide some additional information.

Let us first address the setting considered in the figure. Of course we consider $d=1$ as it would be difficult to provide a useful picture for two of more dimensions. On the other hand, the specific $B_n$ considered there is not essential, and the reason for that choice was to allow the simulators to ``climb up'' in a short number of steps. We note that this choice for $B_n$ may be regarded as a simplification of what would be used for the one-dimensional case of Theorem~\ref{thm:main} (since the $B_n$ are only ``one-sided cones'', whereas the theorem would require symmetric ``two-sided cones'').

Let us now consider the evolution of the simulators throughout the steps depicted in the figure. Initially, the source and target locations of each $v$-simulator are set to $(v,-1)$ and $(v,0)$, respectively. This means that $v$-simulator is currently trying to generate the output at location $(v,0)$ and it is currently looking for an unused input (which it would like to transport) just above the source location $(v,-1)$, namely, at $(v,0)$. Indeed, initially there is always an unused input there (since $\sigma_v \ge 0$ by assumption). This situation is depicted at the top of the figure. Thus, at step $n=1$ of the algorithm, every $v$-simulator moves its source location one step up the pile to $(v,0)$, (vacuously) transports the input from $(v,0)$ to $(v,0)$, and advances its target location to $(v,1)$ (note that $(0,1)$ is the successor of $(0,0)$ in the chosen ordering of $B_\infty$). At this stage, some simulators have already become green (satisfied) and thus have $\tau_v=0$ -- these are simulators 0, 3 and 8. Let us follow what happens next to simulator 1 (which is still unsatisfied): since the 1-simulator is yellow (it is at the top of a loaded pile), it moves its source location one step up the pile to $(1,1)$, (vacuously) transports the input from $(1,1)$ to $(1,1)$, and advances its target location to $(2,1)$ (because $(1,1)$ follows $(0,1)$ in the order on $B_\infty$). Since at the end of step 2, the 1-simulator is red (it is no longer in a loaded pile), in step 3 it does not change its target location and simply moves its source location to the bottom of the next pile, which is $(2,0)$.
Since it is still red, in step 4 it again only moves its source location, this time to $(3,0)$.
We stress that even though, at the end of step 3, the output at the 1-simulator's target location $(2,1)$ has already been generated (it was transported from location $(3,0)$ by the 2-simulator in step 3), the 1-simulator still does not advance its target location; it will only do so once it becomes yellow.
Finally, since the 1-simulator is still red at the end of step 4, in the next step (which is not depicted in the figure) it will move its source location one step up the pile to $(3,1)$. At this stage, we still do not know the eventual value of $\tau_1$, we only know that $\tau_1 \ge 1$ (since there is an output in $1+B_1$ which is needed). Similarly, at the end of step~4, we know that $\tau_0=\tau_3=\tau_8=0$, $\tau_2=\tau_7=1$, $\tau_6 \ge 1$, $\tau_4 \ge 2$ and $\tau_5 \ge 2$. In particular, we know the final output for vertices $\{0,2,3,7,8\}$, but not yet for $\{1,4,5,6\}$.

\subsection{Formal definition of the algorithm}\label{sec:formal-algo-def}
Before providing the algorithm, we require some preparation.

Let us employ the following useful convention regarding stopping times.
Suppose that $\pi$ is an almost surely finite stopping time with respect to the filtration $(\cF^n_\zero)_{n \ge 0}$ defined in~\eqref{eq:F_n}.
We may regard $\pi$ as a deterministic function from $\bigcup_{n \ge 0} S^{B_n}$ to $\N \cup \{*\}$ having the property that, for any $n \ge 0$, $\xi \in S^{B_n}$ and $\xi' \in S^{B_{n+1}}$ such that $\xi'|_{B_n}=\xi$, we have $\pi(\xi) \in \{0,\dots,n,*\}$, we have $\pi(\xi')=\pi(\xi)$ when $\pi(\xi)\neq *$, and we have $\pi(\xi') \in \{n+1,*\}$ when $\pi(\xi)=*$.
The interpretation here is that a value of $*$ means that the stopping time has not been reached. Note, in particular, that for $m \ge 0$ and $\eta \in S^{B_{n+m}}$, the expression $\pi(\eta)>n$ depends only on $\eta|_{B_n}$ (where it is understood that $*>n$ for all integer $n$).
With this in mind, we note that (with a slight abuse of notation), if $A \subset \Z^d \times \N$ contains $B_n$, then the expression $\pi(\eta) \le n$ is well-defined for any $\eta \in S^A$ and depends only on $\eta|_{B_n}$, and thus, the expression $b \in B_{\pi(\eta)}$ is also well-defined for any $b \in B_{n+1}$ (and depends only on $\eta|_{B_n}$).
We further abuse notation by identifying an element $\eta \in (S \cup \{\emptyset\})^{\Z^d}$ with the element $\eta' \in S^A$ in the obvious way, by taking $A:=\{a \in \Z^d \times \N : \eta(a) \neq \emptyset \}$ and $\eta':=\eta|_A$.

We order the elements of $B_\infty := \bigcup_{n \ge 0} B_n$ in such a manner that, for any $n$, every element of $B_n$ appears before every element of $B_\infty \setminus B_n$. This induces a notion of \emph{successor} for elements in $B_\infty$. For $A \subset \Z^d \times \N$, we write $v+A$ for the translated set $\{(v+u,i) : (u,i) \in A\}$. Translating the ordering from $B_\infty$ to $v+B_\infty$, we obtain a notion of \emph{$v$-successor} for elements in $v+B_\infty$. More precisely, the $v$-successor of an element $(v+u,i) \in v+B_\infty$ is $(v+u',i')$, where $(u',i')$ is the successor of $(u,i)$.

At each step $n \ge 0$, we define variables:
\begin{itemize}
	\item $(U^n_v,I^n_v) \in \Z^d \times \N$, the source location of the $v$-simulator.
	\item $(W^n_v,J^n_v) \in \Z^d \times \N$, the target location of the $v$-simulator.
	\item $T^n_v \in \{0,1\}$, the indicator of whether the $v$-simulator transported input (generated output).
\end{itemize}
Once the above variables are defined at step $n$, we further define several objects, all of which are deterministic functions of the above variables. For some of these definitions to make sense, it is important to note that the following properties are satisfied at every step $n$:
\begin{align}
\sum_v \sum_{t=1}^n \1_{T^t_v=1,(U^t_v,I^t_v)=(u,i)} \le 1 &\qquad\text{for all }(u,i) \in \Z^d \times \N,
\label{eq:once-read}\\
	\sum_v \sum_{t=1}^n \1_{T^t_v=1,(W^{t-1}_v,J^{t-1}_v)=(w,j)} \le 1 &\qquad\text{for all }(w,j) \in \Z^d \times \N. \label{eq:once-written}
\end{align}
Equation~\eqref{eq:once-read} says that each input is transported away at most once by at most one simulator. Similarly, \eqref{eq:once-written} says that every output is generated (transported into) at most once by at most one simulator. As the target location of the $v$-simulator will be updated immediately after the required output is generated, there is a shift in the time index in~\eqref{eq:once-written}. Thus, $T^n_v=1$ means that at time step $n$ the $v$-simulator transported an input from the source location $(U^n_v,I^n_v)$ to the target location $(W^{n-1}_v,J^{n-1}_v)$, thus generating the output at $(W^{n-1}_v,J^{n-1}_v)$.

Consider the set of source-target locations of simulators at transport times:
\[ D^n := \left\{ (v,u,i,w,j) : \substack{\text{\normalsize $v \in \Z^d,~T^t_v=1,~(U^t_v,I^t_v)=(u,i),$}\\\text{\normalsize $(W^{t-1}_v,J^{t-1}_v)=(w,j)\text{ for some }1 \le t\le n$}} \right\}.\]
We use $D^n$ and $X$ to construct two $(S \cup \{\emptyset\})$-valued processes $Y^n=(Y^n_{u,i})_{u\in\Z^d,i\ge0}$ and $Z^n=(Z^n_{w,j})_{w\in\Z^d,j\ge0}$, which represent the partial information on $\mathcal{Y}$ and $\cZ$ (the realizations of $X^\sigma$ and $X^\tau$) that has been revealed by time $n$.
The algorithm may use one of two ``update methods'': for $(v,u,i,w,j) \in D^n$, we define
\[ \textbf{(A)}~ Y^n_{u,i} = Z^n_{w,j} := X_{u,i}, \qquad\textbf{(B)}~ Y^n_{u,i} = Z^n_{w,j} := X_{w,j} .\]
If $(u,i)$ is not in the projection of $D^n$ on the 2nd and 3rd coordinates, then set $Y^n_{u,i} := \emptyset$, and similarly, if $(w,j)$ is not in the projection of $D^n$ on the 4th and 5th coordinates, then set $Z^n_{w,j} := \emptyset$. Note that~\eqref{eq:once-read} and~\eqref{eq:once-written} ensure that both update methods are well-defined.
We stress that the two update methods are never used in conjunction with one another -- either update method \textbf{(A)} is used throughout all steps of the algorithm or update method \textbf{(B)} is.

The $Y$ process is associated with $\sigma$, and the $Z$ process with $\tau$. Thus, in update method \textbf{(A)}, $\sigma$ ``sees'' the original process $X$, while $\tau$ ``sees'' a transformed process in which inputs have been transported between space-time locations; in update method \textbf{(B)}, the situation is reversed -- $\tau$ sees the origin process and $\sigma$ sees a transformed process.
Another point of view is that $D^n$ (after forgetting the first coordinate) defines a bipartite graph between two copies of $\Z^d \times \N$ in which any vertex of one copy is matched to at most one vertex in the other copy. The two update methods can then be thought of as orienting all edges from the first copy to the second, or vice versa, where the orientation of an edge determines the direction of flow of information, with the original process $X$ always associated with the copy from which the edges are oriented outwards (so that variables are transported along the edges in the direction of orientation).
As we are interested in realizing the $\tau$ process via the $\sigma$ process, the natural choice is to transport variables from the latter to the former as in update method \textbf{(A)}. Nevertheless, it will turn out to be a helpful idea to consider also the reversed direction of flow.
Thus, update method \textbf{(A)} will yield the required coding, whereas update method \textbf{(B)} will only be used as a comparison tool in the analysis (namely in the proof of Lemma~\ref{lem:eventually-satisfied}). As such, we mainly have update method \textbf{(A)} in mind in our definitions.

We further define
\begin{itemize}
	\item $L^n_u := \max\{ i : (v,u,i,w,j) \in D^n\text{ for some }v,w,j \}$, the last input revealed at $u$.
	\item $u$ is \emph{loaded} at time $n$ if $\sigma_u(Y^n) > L^n_u$.
	\item $v$ is \emph{satisfied} at time $n$ if $(W^n_v,J^n_v) \notin v+B_{\tau_v(Z^n)}$.
\end{itemize}
Thus, $L^n_u$ is the size of the pile at $u$ (in the source process) at time $n$.
A vertex $u$ is loaded at time $n$ if there are more inputs available at $u$ than have already been used by time $n$, i.e., if the pile at $u$ has not been exhausted by time $n$. A vertex $v$ is satisfied at time $n$ if the output at the target location of the $v$-simulator is not needed in order to compute the final output $\cZ_v$. In particular, due to way that the target location evolves, this implies (but is not precisely equivalent to) that the outputs that $v$ needs for its final output have already been generated by time $n$ (see~\eqref{eq:previous-target-locations-are-known} below), so that the final output is known at this time.

The fact that the notions of loaded and satisfied are well-defined is not obvious from their definitions. The fact that the notion of loaded is well-defined follows from the above discussion about stopping times and the following property which will hold at each step $n$:
\begin{equation}\label{eq:previous-source-locations-are-known}
Y^n_{u,i} \neq \emptyset\qquad\text{if and only if}\qquad i \le L^n_u, \qquad\text{for all }(u,i) \in \Z^d \times \N.
\end{equation}
Similarly, the fact that the notion of satisfied is well-defined follows from the following property, which will hold for all $n$:
\begin{equation}\label{eq:previous-target-locations-are-known}
Z^n_{w,j} \neq \emptyset \qquad\text{for all $(w,j) \in \Z^d \times \N$ that strictly $v$-precede }(W^n_v,J^n_v)\text{ for some }v \in \Z^d .
\end{equation}

Finally, for $(w,j) \in \Z^d \times \N$, we also define
\[ Q^n(w,j):=\left\{ v : (W^n_v,J^n_v)=(w,j),~ \substack{\text{\normalsize $U^n_v$ is loaded at time $n$,~ $I^n_v = L^n_{U^n_v}$}\\\text{\normalsize $v$ is unsatisfied at time $n$,~ $Z^n_{w,j} = \emptyset$}} \right\} .\]
Thus, $Q^n(w,j)$ consists of those simulators who both wish to generate the output at $(w,j)$ and can also do so (they wish to do so as they are unsatisfied, meaning that they need that output, and as the output has not yet been generated; they can do so as they are at the top of a loaded pile in the source process). Since only one such simulator can be allowed to actually generate the output at $(w,j)$, we will let the lexicographical-minimal one do so.

\medskip

With these definitions, we can now present the algorithm. We refer the reader to Section~\ref{sec:alg-informal} for an informal description and to Figure~\ref{fig:algorithm} for an illustration.

\begin{algorithm}
\renewcommand\algorithmicindent{1.5em}
\renewcommand\thealgorithm{}
\caption{Finitary coding from $X^\sigma$ to $X^\tau$}
\begin{algorithmic}
\FOR{$v \in \Z^d$ (simultaneously)}
\STATE \vspace{3pt} $(U^0_v,I^0_v,W^0_v,J^0_v,T^0_v) \gets (v,-1,v,0,0)$ \vspace{3pt}
\ENDFOR
\FOR{$n=1,2,\dots$}
\FOR{$v \in \Z^d$ (simultaneously)}
\IF{$v$ is satisfied at time $n-1$}
\STATE \vspace{3pt}
\hspace{-12ex}\makebox[12ex][s]{i.}%
$(U^n_v,I^n_v,W^n_v,J^n_v,T^n_v) \gets (U^{n-1}_v,I^{n-1}_v,W^{n-1}_v,J^{n-1}_v,0)$ \vspace{3pt}
\ELSIF{$I^{n-1}_v < L^{n-1}_{U^{n-1}_v}$}
\STATE \vspace{3pt}
\hspace{-12ex}\makebox[12ex][s]{ii.}%
$(U^n_v,I^n_v,W^n_v,J^n_v,T^n_v) \gets
	(U^{n-1}_v,I^{n-1}_v+1,W^{n-1}_v,J^{n-1}_v,0)$ \vspace{3pt}
\ELSIF{$U^{n-1}_v$ is unloaded at time $n-1$}
\STATE \vspace{3pt}
\hspace{-12ex}\makebox[12ex][s]{iii.}%
$(U^n_v,I^n_v,W^n_v,J^n_v,T^n_v) \gets (U^{n-1}_v+e_1,0,W^{n-1}_v,J^{n-1}_v,0)$ \vspace{3pt}
\ELSE
\STATE \vspace{3pt}
\hspace{-12ex}\makebox[12ex][s]{iv.}%
$(U^n_v,I^n_v) \gets (U^{n-1}_v,I^{n-1}_v+1)$
\STATE $(W^n_v,J^n_v) \gets \text{$v$-successor of }(W^{n-1}_v,J^{n-1}_v)$
\STATE $T^n_v \gets \1(\text{$v$ is the lexicographical-minimal element of $Q^{n-1}(W^{n-1}_v,J^{n-1}_v)$)}$
\vspace{3pt}
\ENDIF
\ENDFOR
\ENDFOR
\end{algorithmic}
\end{algorithm}

\subsection{Comparison between our algorithm and that of van den Berg and Steif in~\cite{van1999existence}}
\label{sec:alg-comparison}
The two algorithms are similar in spirit (though they are not set up in the same way) and we focus here on the moral differences between the two. We have identified two such differences, the primary one being in how they relate to unneeded variables and, consequently, in how they transport such variables between space-time locations. Here, ``needed'' may refer to either an input or an output, where an input (output) at location $(u,i)$ is needed by time $n$ if $Y^n_{u,i} \neq \emptyset$ ($Z^n_{u,i} \neq \emptyset$). Roughly speaking, the algorithm in~\cite{van1999existence} declares an input variable unneeded at a certain time once it is guaranteed that the output variable at the same location will not be needed at any later time (and was also not needed until that time). Only inputs which are marked as unneeded in this sense are allowed to be transported. On the other hand, our algorithm never declares an input variable unneeded. Instead, we only concern ourselves with whether an input was not needed by a certain time, and any such variable is allowed to be transported at that time. If at a later time it turns out that the output at the same location was needed after all, another input variable will be transported to that location. In other words, the algorithm in~\cite{van1999existence} transports an input from location $(u,i)$ to another location $(w,j)$ only if the output at $(u,i)$ is never needed, whereas our algorithm does not have this restriction, and may transport from $(u,i)$ to $(w,j)$ at some time, and then from $(u',i')$ to $(u,i)$ at a later time. The latter approach is essential in the generality of Proposition~\ref{prop:coding-simple} and Proposition~\ref{prop:coding}. The reason is that, while for some choices of $B=(B_n)_n$, any particular output variable could only be potentially needed by finitely many vertices (e.g., as for the ``cones'' used in the proof of Theorem~\ref{thm:main}, where the output at $(w,j)$ can only be needed by vertices at distance at most $\Delta j$ from $w$), in general, any vertex might need that variable at some time (e.g., as for the ``cubes'' given by $B_n=\{ (u,i) : |u| \le \Delta n,\, 0 \le i \le n \}$) so that it is not possible to know (in a finitary manner) whether or not an output variable will be needed eventually. The second difference between the algorithms is that, unlike the algorithm in~\cite{van1999existence}, ours is somewhat wasteful (by design; see Section~\ref{sec:alg-informal}) in that in certain situations it decides not to use an available input variable (and to simply throw it away). We found this useful (though it is probably not essential) for keeping track of how far variables are transported, which was important for understanding the coding radius.

\section{Proof of Proposition~\ref{prop:coding}}
\label{sec:proof}

In this section, we use the algorithm described in Section~\ref{sec:algo} to prove Proposition~\ref{prop:coding}.

The following claim establishes some simple properties of the algorithm.
Let $\preceq$ denote the partial order on $\Z^d$ in which $u \preceq u'$ if $u'=u+ke_1$ for some $k \ge 0$.
We also denote by $\preceq$ the partial order on $\Z^d \times \Z$ in which $(u,i) \preceq (u',i')$ if $u \preceq u'$ and $(i'-i)\1_{u=u'} \ge 0$.

\begin{claim}\label{cl:props}
In either update method, almost surely, for all $n \ge 1$, $i,j \ge 0$ and $u,v,v',w \in \Z^d$,
\begin{enumerate}
	\item \eqref{eq:once-read}, \eqref{eq:once-written}, \eqref{eq:previous-source-locations-are-known} and \eqref{eq:previous-target-locations-are-known} hold.
	\item If $v \prec v'$ and both are unsatisfied at time $n-1$, then $(U^{n-1}_v,I^{n-1}_v) \prec (U^n_v,I^n_v) \prec (U^n_{v'},I^n_{v'})$.
	\item If $v \preceq w \prec U^n_v$, then $w$ is unloaded at time $n-1$.
\end{enumerate}
\end{claim}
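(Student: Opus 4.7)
The plan is to prove all three parts by simultaneous induction on $n$; at $n=0$ everything is either vacuous or immediate from the initialization $(U^0_v,I^0_v,W^0_v,J^0_v,T^0_v)=(v,-1,v,0,0)$, and the inductive step proceeds by a case analysis on which of the four branches (i)--(iv) of the algorithm fires for each simulator. The three parts are tightly coupled --- part (3) uses the monotonicity in time of ``unloaded'' (which itself rests on \eqref{eq:previous-source-locations-are-known}), and part (1) uses part (2) --- so I would address them in the order \eqref{eq:previous-source-locations-are-known}, then part (3), then part (2), and finally the remaining items of part (1).

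For \eqref{eq:previous-source-locations-are-known}, I would observe that the only way to introduce a new revealed input at a vertex $u$ is via case (iv) firing on some simulator $v$ with $U^{n-1}_v=u$, $I^{n-1}_v=L^{n-1}_u$, and $u$ loaded; this places the new input at $(u,L^{n-1}_u+1)$, so $L^n_u\in\{L^{n-1}_u,L^{n-1}_u+1\}$ and the set $\{i:Y^n_{u,i}\neq\emptyset\}$ extends by at most one consecutive index, giving \eqref{eq:previous-source-locations-are-known}. As a corollary, ``unloaded'' is monotone in time: once $\sigma_u(Y^{n-1})\leq L^{n-1}_u$ the stopping value is frozen (since $\sigma$ is a simple stopping time), and $L_u$ is likewise frozen (since case (iv) never fires on an unloaded pile). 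Part (3) then follows: if case (iii) fires for $v$ then $U^n_v=U^{n-1}_v+e_1$ and the only new $w$ to check is $U^{n-1}_v$, which is unloaded at time $n-1$ by the triggering condition; in all other cases $U^n_v=U^{n-1}_v$ and monotonicity together with the IH finishes the job.

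For part (2), the first inequality $(U^{n-1}_v,I^{n-1}_v)\prec(U^n_v,I^n_v)$ follows by inspection of cases (ii), (iii), (iv). For the second, I would write $v'=v+me_1$ with $m\geq 1$ and $U^n_w=w+a^n_we_1$ for $w\in\{v,v'\}$; the assertion then reads as the lex comparison $(a^n_v,I^n_v)<_{\mathrm{lex}}(m+a^n_{v'},I^n_{v'})$ on $\N\times\Z$, and I would verify the $3\times 3$ combinations of simultaneous moves of $v$ and $v'$ (both unsatisfied, so neither is in case (i)). The one delicate scenario is when $v$ fires case (iii) and lands precisely on $v'$'s current pile (i.e., $a^{n-1}_v+1=m+a^{n-1}_{v'}$); there I need $v'$'s new position to strictly exceed $(m+a^{n-1}_{v'},0)$ in lex order. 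Since ``satisfied'' is an absorbing state, $v'$ has been unsatisfied throughout and thus advanced every prior step, giving $I^{n-1}_{v'}\geq 0$ for $n\geq 2$, so $v'$'s new position is either $(m+a^{n-1}_{v'},\geq 1)$ (cases ii/iv) or has strictly larger first coordinate (case iii). The dual near-collision --- $v$ firing case (iii) while sitting on the same pile as $v'$ with $I_v<I_{v'}$ --- is ruled out by the IH, since $L^{n-1}_{U^{n-1}_v}\geq I^{n-1}_{v'}>I^{n-1}_v$ would force $v$ to fire case (ii) instead of (iii).

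Finally, for part (1)(a) \eqref{eq:once-read}, I would argue that the unique step at which a transport from $(u,i)$ could occur is the one with $L^{n-1}_u=i-1$, and at that step part (2) limits the number of unsatisfied simulators with source location $(u,i-1)$ to at most one: comparable simulators are strictly $\prec$-ordered by part (2), while simulators whose offset is not a multiple of $e_1$ have disjoint source trajectories, since sources only move in the $e_1$ direction. Part (1)(b) \eqref{eq:once-written} is built directly into the algorithm via the $Q$-construction: the lex-minimum rule prevents two simultaneous transports to the same $(w,j)$, and the requirement $Z^{n-1}_{w,j}=\emptyset$ in $Q^{n-1}(w,j)$ blocks any subsequent transports once that output has been generated. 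For part (1)(d) \eqref{eq:previous-target-locations-are-known}, only case (iv) enlarges the set of strict $v$-predecessors of $(W^n_v,J^n_v)$, and it adds the old target $(w,j)$; if $Z^{n-1}_{w,j}=\emptyset$, then $v\in Q^{n-1}(w,j)$, which is finite (each simulator's target advances at most once per step, so only finitely many $v$'s can currently target any given $(w,j)$), hence admits a lex-minimum that transports to $(w,j)$ in step $n$. The hardest part will be the case analysis for part (2), especially the preservation of the lex-inequality through the ``jump'' in case (iii); the rest is essentially routine bookkeeping once the inductive order is settled.
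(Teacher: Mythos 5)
Your overall strategy (simultaneous induction on $n$ with a branch-by-branch case analysis) matches the paper's, which gives no more detail than ``the claim follows easily by induction on $n$.'' Most of your bookkeeping is fine, but there is a recurring gap: you repeatedly use the invariant that for every unsatisfied $v$ one has $I^n_v\le L^n_{U^n_v}$, without stating it or adding it to the induction.

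The most visible instance is in your verification of \eqref{eq:previous-target-locations-are-known}, where you assert ``if $Z^{n-1}_{w,j}=\emptyset$, then $v\in Q^{n-1}(w,j)$.'' Membership in $Q^{n-1}(w,j)$ requires the \emph{equality} $I^{n-1}_v=L^{n-1}_{U^{n-1}_v}$, whereas the branch condition for case (iv) only gives $I^{n-1}_v\ge L^{n-1}_{U^{n-1}_v}$; the other inequality is exactly the unstated invariant. You also lean on it in your part~(2) analysis when you write ``$L^{n-1}_{U^{n-1}_v}\ge I^{n-1}_{v'}$.'' So the invariant needs to be an explicit item carried along in the induction, and it needs its own verification.

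Moreover, that verification is not routine. If case (iv) fires for $v$ at step $n$ with $T^n_v=0$ (which happens whenever $v$ is in $Q^{n-1}$ but not lex-minimal, or whenever $Z^{n-1}_{W^{n-1}_v,J^{n-1}_v}\neq\emptyset$ already), then $I^n_v=I^{n-1}_v+1$, while by your own uniqueness argument (part~(2)) no transport occurs from $U^n_v$ at step $n$, so $L^n_{U^n_v}=L^{n-1}_{U^n_v}$ and hence $I^n_v=L^n_{U^n_v}+1$. If $v$ stays unsatisfied and $U^n_v$ stays loaded, case (iv) fires again at $n+1$ with $v\notin Q^n$, and the target is advanced past $(W^n_v,J^n_v)$ with no argument that its output has been (or will be) generated — which is precisely what \eqref{eq:previous-target-locations-are-known} must rule out. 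You therefore need either to strengthen the inductive hypothesis in a way that survives the $T^n_v=0$ branch, or to replace the ``$v\in Q^{n-1}(w,j)$'' step by an argument that some \emph{other} simulator $v'$ with the same target and with $I^{n-1}_{v'}=L^{n-1}_{U^{n-1}_{v'}}$ always exists (and transports) whenever a simulator advances its target. As written, this step is a genuine gap.
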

\begin{proof}
The claim follows easily by induction on $n$.
\end{proof}

The following lemma states precisely the intuitive fact that transporting inputs from one space-time location to another does not change the resulting distribution. Denote the state at time $n$ by $\mathbb{S}^n := (U^n,I^n,W^n,J^n,T^n,D^n)$, where $U^n=(U^n_v)_{v \in \Z^d}$, $I^n=(I^n_v)_{v \in \Z^d}$ and so forth.

\begin{lemma}\label{lem:update-methods-equiv}
The distribution of $(\mathbb{S}^n,Y^n,Z^n)_{n \ge 0}$ does not depend on whether update method \textbf{(A)} or \textbf{(B)} is used in the algorithm.
\end{lemma}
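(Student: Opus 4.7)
The plan is to induct on $n$, showing that the joint distribution of $(\mathbb{S}^t, V^t)_{0 \le t \le n}$ agrees under both methods, where I introduce the ``raw value'' $V^t_e := Y^t_{u,i} = Z^t_{w,j}$ for each edge $e=(v,u,i,w,j) \in D^t$; this is unambiguous since, by~\eqref{eq:once-read} and~\eqref{eq:once-written}, each source location and each target location appears in at most one such edge. Note that $V^t_e = X_{u,i}$ under method~\textbf{(A)} and $V^t_e = X_{w,j}$ under~\textbf{(B)}, while $(Y^t, Z^t)$ is recovered from $(\mathbb{S}^t, V^t)$ by a deterministic, method-independent rule. Hence showing the desired equidistribution of the $(\mathbb{S}^t, V^t)$ implies the lemma.

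The base case $n=0$ holds since $D^0 = \emptyset$. For the inductive step, one first observes that $\mathbb{S}^n$ is a deterministic, method-independent function of $(\mathbb{S}^{n-1}, V^{n-1})$: the algorithm's case selection is governed only by the predicates ``$v$ satisfied'' (depending on $\tau_v(Z^{n-1})$), ``$u$ loaded'' (depending on $\sigma_u(Y^{n-1})$) and by the state quantities $I^{n-1}_v$, $L^{n-1}_u$, and membership in $Q^{n-1}$, all of which are recoverable from $(\mathbb{S}^{n-1}, V^{n-1})$. So the inductive hypothesis immediately yields that $((\mathbb{S}^t, V^t)_{t \le n-1}, \mathbb{S}^n)$ has the same joint law under both methods, and the remaining task is to compare the new entries $(V^n_e)_{e \in D^n \setminus D^{n-1}}$ conditional on everything revealed so far.

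The key technical step, and the main obstacle, is a ``freshness'' argument showing that, conditional on $(\mathbb{S}^n, V^{n-1})$, these new entries are i.i.d.\ copies of $X_{\zero,0}$ under either method. Under~\textbf{(A)}, a step-$n$ transport creates $V^n_e = X_{u,i}$ at a source $(u,i) = (U^n_v, I^n_v)$, and the algorithm case producing the transport (case~iv) forces $I^n_v = L^{n-1}_{U^n_v}+1$, so $(u,i)$ is not a source of any edge in $D^{n-1}$; since under~\textbf{(A)} the raw values in $V^{n-1}$ are precisely the $X$-values at sources of $D^{n-1}$, the variable $X_{u,i}$ has not yet been exposed. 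Under~\textbf{(B)}, the new entry is $X_{w,j}$ at the target $(w,j) = (W^{n-1}_v, J^{n-1}_v)$, and the requirement $v \in Q^{n-1}(w,j)$ includes $Z^{n-1}_{w,j} = \emptyset$, so $(w,j)$ is not a target of any edge in $D^{n-1}$, and under~\textbf{(B)} the raw values in $V^{n-1}$ are exactly the $X$-values at targets of $D^{n-1}$, so $X_{w,j}$ is fresh. In both methods, distinct step-$n$ transports use distinct source (resp.\ target) locations by~\eqref{eq:once-read} (resp.~\eqref{eq:once-written}), so the new $X$-values sit at distinct sites not exposed by $(\mathbb{S}^{n-1}, V^{n-1})$, and by the i.i.d.\ structure of $X$ they are mutually independent samples of $X_{\zero,0}$, independent of $(\mathbb{S}^n, V^{n-1})$. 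This closes the induction. The delicate point is that the two methods expose $X$ at different sets of locations, so the freshness bookkeeping must be carried out separately for each; but the resulting conditional law of the new values is the same i.i.d.\ distribution in both cases, which is what the induction requires.
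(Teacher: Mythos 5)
Your proof is correct and takes essentially the same approach as the paper's: induct on $n$, observe that $\mathbb{S}^n$ is a method-independent deterministic function of the information revealed through step $n-1$, and then argue via~\eqref{eq:once-read} (under method \textbf{(A)}) and~\eqref{eq:once-written} (under method \textbf{(B)}) that the newly transported raw values are i.i.d.\ copies of $X_{\zero,0}$ conditionally on the past. Your reformulation in terms of $(\mathbb{S}^t,V^t)$ rather than $(\mathbb{S}^t,Y^t,Z^t)$ is cosmetic, and the freshness bookkeeping you spell out (that under \textbf{(A)} a step-$n$ transport reads a source location strictly above $L^{n-1}_{U^n_v}$, and under \textbf{(B)} it reads a target location where $Z^{n-1}$ is still $\emptyset$) is exactly the content the paper compresses into ``(i) follows easily from~\eqref{eq:once-read} and (ii) from~\eqref{eq:once-written}.''
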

\begin{proof}
	Observe that the algorithm does not explicitly depend on the update method used, but rather depends on it implicitly through the definitions of $Y^n$ and $Z^n$.
We prove by induction that the distribution of $\cS^n := (\mathbb{S}^m,Y^m,Z^m)_{0 \le m \le n}$ does not depend on the update method.
This is immediate for $n=0$, since $\cS^0$ is deterministic. Fix $n \ge 1$ and observe that $\mathbb{S}^n$ is measurable with respect to $\cS^{n-1}$. It thus suffices to show that (i) when using update method \textbf{(A)}, conditioned on $\cS^{n-1}$, $(X_{u,i})_{(v,u,i,w,j)\in D^n \setminus D^{n-1}}$ is a sequence of independent random variables having the distribution of $X_{\zero,0}$, and (ii) when using update method \textbf{(B)}, conditioned on $\cS^{n-1}$, $(X_{w,j})_{(v,u,i,w,j)\in D^n \setminus D^{n-1}}$ is such a sequence. Indeed, (i) follows easily from~\eqref{eq:once-read} and (ii) from~\eqref{eq:once-written}.
\end{proof}

\begin{lemma}\label{lem:eventually-satisfied}
Almost surely, every $v \in \Z^d$ is eventually satisfied. Moreover,
\[ \Pr(v\text{ is not satisfied at time }n) = e^{-\Omega\big(n^{1/(d+2)}\big)} \qquad\text{as }n \to \infty .\]
\end{lemma}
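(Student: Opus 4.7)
By Lemma~\ref{lem:update-methods-equiv}, I may compute $\Pr(v\text{ not satisfied at time }n)$ in update method~(B), where $Z^n_{w,j}=X_{w,j}$ at every generated target location. The stopping time $\tau_v(Z^n)$ governing satisfaction therefore coincides with $\tau_v(X)$ as soon as the $v$-simulator has generated all outputs in $v+B_{\tau_v(X)}$, and inspecting the algorithm reveals that case~(iv) is the unique case which advances the target. Hence the event $\{v\text{ not satisfied at time }n\}$ equals $\{K^{(n)}_v<|B_{\tau_v(X)}|\}$, where $K^{(n)}_v$ denotes the number of case~(iv) moves executed by the $v$-simulator up to time~$n$. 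Since $\tau_v$ has exponential tails and $|B_t|\le Ct^{d+1}$ by the linearity of $B$, the variable $|B_{\tau_v}|$ has stretched-exponential tails of exponent $1/(d+1)$, so the task reduces to proving the linear lower bound $K^{(n)}_v\ge cn$ with failure probability $\exp(-\Omega(n^{1/(d+2)}))$.

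For the lower bound, I exploit the one-dimensional structure of the source dynamics. The $v$-simulator's source migrates along the row $R_v:=v+\N e_1$, and by Claim~\ref{cl:props}(2) the unsatisfied simulators on $R_v$ are at strictly $\preceq$-ordered source heights at every instant. Consequently, any simulator indexed by $v-me_1$ ($m\ge 1$) arrives at each pile of $R_v$ only after the $v$-simulator has exited it, so the pile is already exhausted on arrival and contributes no transports. Thus the only simulators transporting from the first $K$ piles of $R_v$ are the $v$-simulator itself and the higher-priority ones $v+me_1$ for $1\le m\le K-1$. Since each simulator performs at most $|B_{\tau_{\cdot}}|$ transports in its entire lifetime, the number of the $v$-simulator's transports from these piles is at least the total input supply minus the higher-priority demand:
\[
K^{(n)}_v\;\ge\;\sum_{k=0}^{K-1}(\sigma_{v+ke_1}+1)\;-\;\sum_{m=1}^{K-1}|B_{\tau_{v+me_1}}|,
\]
using that case~(iv) moves include all transports. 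By the WLOG boundedness $\sigma\le M$, the $v$-simulator processes at least $K:=\lfloor n/(M+2)\rfloor$ piles within $n$ steps, and by the hypothesis $\E|B_{\tau_v}|<\E\sigma_v+1$ together with translation stationarity, the mean of the right-hand side is of order $\delta K\asymp n$ for some $\delta>0$.

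The main obstacle will be the quantitative large-deviation estimate for $\sum_{m=1}^{K-1}|B_{\tau_{v+me_1}}|$: these variables are translation-stationary with stretched-exponential tails of exponent $1/(d+1)$, but are neither bounded nor independent across~$m$, since their defining stopping fields $v+me_1+B_{\tau_{v+me_1}}$ may overlap. I would address this by a truncation-and-blocking scheme. Noting $|B_s|\ge s+1$, on the event $\{|B_{\tau_{v+me_1}}|\le t\}$ one has $\tau_{v+me_1}\le t$ and the truncated variable becomes measurable with respect to the $\Delta t$-neighborhood of $v+me_1$, so two truncated variables at row-distance exceeding $2\Delta t$ are independent. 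Grouping the $K$ terms into blocks of width $O(t)$ and invoking a stretched-exponential concentration estimate in the spirit of Lemma~\ref{lem:stretched-exp-decay} (credited to Matan Harel in the acknowledgments) yields a tail of $\exp(-\Omega(n^{1/(d+2)}))$ for the deviation of the truncated sum from its mean, while the truncation error is at most $K\cdot\Pr(|B_{\tau_v}|>t)\le K\exp(-ct^{1/(d+1)})$. Choosing $t:=\Theta(n^{(d+1)/(d+2)})$ balances these two error sources and fixes the exponent. Almost-sure eventual satisfaction of every $v$ then follows by Borel--Cantelli from $\sum_n\exp(-cn^{1/(d+2)})<\infty$.
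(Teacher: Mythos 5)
Your proposal follows the same high-level strategy as the paper's proof: switch to update method~\textbf{(B)} via Lemma~\ref{lem:update-methods-equiv}, reduce non-satisfaction to a count of case~(iv) moves versus $|B_{\tau_v(X)}|$, exploit the one-dimensional source dynamics and Claim~\ref{cl:props}(2) to obtain a supply-versus-demand inequality along the row $R_v$, and then conclude via a stretched-exponential concentration estimate (Lemma~\ref{lem:stretched-exp-decay}). This is the paper's route, not a new one.

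There is, however, a genuine gap in the step ``By the WLOG boundedness $\sigma\le M$, the $v$-simulator processes at least $K=\lfloor n/(M+2)\rfloor$ piles within $n$ steps.'' This is not automatic: a simulator advances its source (and target) in case~(iv) even when the output at its current target has already been generated by another simulator, in which case $T^n_v=0$ and the source can be pushed strictly above the revealed top of the pile (i.e.\ $I^n_v>L^n_{U^n_v}$). After that, the simulator may remain in the same pile doing case~(iv) with $T=0$ for an unbounded number of steps while waiting for the pile to become exhausted, and if the pile remains loaded forever, it never exits at all. Consequently, the number of steps per pile is not a priori bounded by $M+2$, and the supply count $\sum_{k<K}(\sigma_{v+ke_1}+1)$ is only the number of available inputs, not the number transported, unless one first establishes that the relevant piles are actually exhausted. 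The paper's proof handles exactly this point: it introduces the set $\cL$ of vertices that stay loaded indefinitely, argues by contradiction that $v$ not satisfied at time $N_k:=\sum_{i\le k}(L^\infty_{v+i}+1)$ forces both $\cL_k=\emptyset$ and $N_k\le M_k$ (using the bound $I^n_v\le L^\infty_{U^n_v}$ and Claim~\ref{cl:props}), and only then runs the concentration argument on $N_k$ and $M_k$. Your inequality $K^{(n)}_v\ge\sum_k(\sigma_{v+ke_1}+1)-\sum_m|B_{\tau_{v+me_1}}|$ is correct conditionally on all $K$ piles having been exhausted, but the case of a forever-loaded pile, and the control of the $v$-simulator's progress through the piles, must be addressed explicitly; as written, this part of the proposal would not go through.
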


As will be explained in the proof of Proposition~\ref{prop:coding} below, Lemma~\ref{lem:eventually-satisfied} implies that the algorithm ``locally terminates'' in finite time in the sense that the final output at any vertex is determined at some finite step.
Nevertheless, this does not yet imply that the algorithm yields a finitary coding.
What is missing is some control on the propagation of information in each step. This is the content of the following lemma.
Let $\Delta$ be as in~\eqref{eq:linear-stopping-process}.
Denote $D^n_v := \{ (u,i,w,j) : (v,u,i,w,j) \in D^n \}$.

\begin{lemma}\label{lem:info-propagation}
When using update method \textbf{(A)}, for any $n \ge 0$ and $v \in \Z^d$, the following random variables are measurable with respect to $(X_{u,i})_{|u-v| \le 5\Delta n^2, 0 \le i \le \sigma_u}$:
\begin{enumerate}
	\item $\mathbb{S}^n_v = (U^n_v,I^n_v,W^n_v,J^n_v,T^n_v,D^n_v)$,
	\item $\{ Z^n_{w,j} \}_{|w-v| \le \Delta n, j \ge 0}$.
\end{enumerate}
\end{lemma}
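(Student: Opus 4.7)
I would prove both parts jointly by induction on $n$. The base case $n = 0$ is immediate, since $\mathbb{S}^0_v = (v, -1, v, 0, 0, \emptyset)$ is deterministic and $Z^0 \equiv \emptyset$.

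For the inductive step, three geometric facts govern the spatial spread, each easily verified from the algorithm and from Claim~\ref{cl:props}: (a) $(W^m_{v'} - v', J^m_{v'}) \in B_m$, so $|W^m_{v'} - v'| \le \Delta m$ by \eqref{eq:linear-stopping-process}; (b) $U^m_{v'} - v' = k e_1$ with $0 \le k \le m$, so $|U^m_{v'} - v'| \le m$; (c) consequently, two simulators sharing a target location at time $m$ lie within $2\Delta m$ of each other. Inspecting the four cases of the algorithm shows that $\mathbb{S}^n_v$ is a deterministic function of $\mathbb{S}^{n-1}_v$ together with: the values $Z^{n-1}_{w,j}$ at the strict $v$-predecessors of $(W^{n-1}_v, J^{n-1}_v)$ (for the satisfied-check; these live at $|w - v| \le \Delta(n-1)$); the pile $(X_{U^{n-1}_v, i})_{0 \le i \le \sigma_{U^{n-1}_v}}$ at the source vertex (for the loaded-check and the value $L^{n-1}_{U^{n-1}_v}$; this is legitimate because under update method \textbf{(A)} we have $Y^{n-1}_{u,i} = X_{u,i}$ wherever non-empty, so $\sigma_u$ applied to $Y^{n-1}$ agrees with $\sigma_u$ applied to $X$); and the states $\mathbb{S}^{n-1}_{v'}$ of the competing simulators $v'$ with $W^{n-1}_{v'} = W^{n-1}_v$, which by (c) satisfy $|v' - v| \le 2\Delta(n-1)$ (for the lex-min test in case iv).

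Applying the inductive hypothesis, part (1) to each competing $v'$ and part (2) to the $Z^{n-1}$ values, expands the $X$-region controlling $\mathbb{S}^n_v$ to the ball around $v$ of radius at most $5\Delta(n-1)^2 + 2\Delta(n-1)$; adjoining the source pile, which lies within $|u - v| \le n-1$, does not enlarge this. The cleanest way to close the recursion is to sharpen the inductive statement to radii $r_1(n) := \Delta n(n-1)$ for part (1) and $r_2(n) := \Delta n(n+1) = r_1(n) + 2\Delta n$ for part (2); one then checks that the recursion $r_1(n) = r_2(n-1)$ closes consistently and that $r_1(n), r_2(n) \le 5\Delta n^2$ for all $n \ge 1$. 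Part (2) at step $n$ now follows from part (1) at step $n$: any $Z^n_{w,j}$ with $|w - v| \le \Delta n$ is determined by the states $\mathbb{S}^n_{v'}$ of its potential generators, and by (a) each such $v'$ satisfies $|v' - w| \le \Delta n$, hence $|v' - v| \le 2\Delta n$.

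The principal obstacle is the mutual dependency between parts (1) and (2) through the induction; resolving it cleanly requires the slightly sharper joint statement with the radii $r_1, r_2$ above, after which the per-step bookkeeping reduces to routine arithmetic. A minor auxiliary point is to make sure that the ``stopping-time'' values $\tau_v(Z^{n-1})$ and $\sigma_u(Y^{n-1})$ appearing in the algorithm are genuinely measurable with respect to the partially revealed $Z^{n-1}$ and $Y^{n-1}$ (rather than requiring the full process), which is where the adaptive-stopping convention spelled out at the start of Section~\ref{sec:algo} and properties \eqref{eq:previous-source-locations-are-known}--\eqref{eq:previous-target-locations-are-known} from Claim~\ref{cl:props}(1) come into play.
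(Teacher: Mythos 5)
Your overall strategy—induction on $n$, tracking the spatial spread of dependence per step, and using the geometric facts $|U^m_{v'} - v'| \le m$ and $|W^m_{v'} - v'| \le \Delta m$—matches the paper's approach. However, there is a genuine gap in the inductive step.

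You claim that $\mathbb{S}^n_v$ is determined by $\mathbb{S}^{n-1}_v$, the predecessor $Z^{n-1}$-values, the source pile $(X_{U^{n-1}_v,i})_{0 \le i \le \sigma_{U^{n-1}_v}}$, and the states $\mathbb{S}^{n-1}_{v'}$ of simulators sharing $v$'s target. The problem is the quantity $L^{n-1}_{U^{n-1}_v}$, which is needed for the case distinction (ii) versus (iii)/(iv) and for the loaded check. By definition, $L^{n-1}_u = \max\{i : (v',u,i,w,j) \in D^{n-1}\}$ is the maximum height from which \emph{any} simulator has transported an input at $u$; it is a property of the global transport history, not of the pile values themselves. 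The source pile $(X_{u,i})_{0 \le i \le \sigma_u}$ tells you $\sigma_u(X)$ but not how much of that pile has been consumed. Nor is $L^{n-1}_u$ recoverable from $D^{n-1}_v$ (which records only $v$'s own transports), nor from $I^{n-1}_v$ (since $I$ can strictly exceed $L$ when case (iv) is executed with $T=0$, and $L$ can subsequently be raised by a different simulator $v' \prec v$ climbing the same pile). The paper's Step 1 handles this explicitly: it observes that any $v'$ that could have transported from $u=U^{n-1}_v$ satisfies $v'\preceq U^{n-1}_{v'}\prec v'+ne_1$, hence $|v'-v| < n$, so the relevant events $\{(v',u,i,w,j)\in D^{n-1}\}$ lie in $\cF_{v',r_{n-1}} \subset \cF_{v,r_{n-1}+n}$ by induction. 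The same issue recurs inside your treatment of the competing simulators: deciding whether $v' \in Q^{n-1}(w,j)$ requires the loaded status of $U^{n-1}_{v'}$, hence $L^{n-1}_{U^{n-1}_{v'}}$, hence the $D^{n-1}$-data of simulators near $v'$; this extra layer is absent from your accounting.

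A secondary consequence is that your sharpened radii $r_1(n)=\Delta n(n-1)$, $r_2(n)=\Delta n(n+1)$ do not close once the missing $L$-contribution is included: the $D^{n-1}$-data of simulators within distance $n-1$ of a competing $v'$ at distance $2\Delta(n-1)$ from $v$ pushes the bound to $r_1(n-1) + (n-1) + 2\Delta(n-1)$, which exceeds $r_2(n-1)=r_1(n)$. The bound $5\Delta n^2$ still absorbs this, so the stated lemma remains attainable—your proof just needs the additional ingredient of propagating the $D$-data of nearby simulators to determine the $L$-values, as the paper does.
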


Before proving Lemma~\ref{lem:eventually-satisfied} and Lemma~\ref{lem:info-propagation}, we first use them to prove the proposition.

\subsection{Proof of Proposition~\ref{prop:coding}}
	Consider the algorithm using update method \textbf{(A)}.
	For $n \ge 1$, define
	\[ M^n_v :=  \min \big\{ m \ge 0 : (W^n_v,J^n_v) \in v+B_m \big\} .\]
	Note that  $M^n_v \ge 1$ so that~\eqref{eq:previous-target-locations-are-known} implies that $Z^n_{w,j} \neq \emptyset$ for all $(w,j) \in v+B_{M^n_v-1}$. Thus,
	\[ \cZ^n_v := (Z^n_{v+w,j})_{(w,j) \in B_{M^n_v-1}} \]
	takes values in $\bigcup_{n \ge 0} S^{B_n}$.	
	Let $N_v$ denote the time at which $v$ is first satisfied.
	By Lemma~\ref{lem:eventually-satisfied}, $N_v$ is almost surely finite.
	Recall our conventions about stopping times discussed in the beginning of Section~\ref{sec:algo}.
	By the definition of satisfied, $(W^{N_v}_v,J^{N_v}_v) \notin v+B_{\tau_v(Z^{N_v})}$, so that $\tau_v(Z^{N_v}) < M^{N_v}_v$. Similarly, since $v$ is not satisfied at time $N_v-1$, it follows that $\tau_v(Z^{N_v-1}) \ge M^{N_v-1}_v$, and hence, also that $\tau_v(Z^{N_v}) \ge M^{N_v-1}_v$. Since $M^n_v \le M^{n-1}_v+1$, we conclude that $\tau_v(Z^{N_v}) = M^{N_v}_v - 1$.
	Thus, $\cZ^{N_v}_v = (Z^{N_v}_{v+w,j})_{(w,j) \in B_{\tau_v}}$, where $\tau_v := \tau_v(Z^{N_v}) = \tau_\zero(\cZ^{N_v}_v)$.
	Lemma~\ref{lem:update-methods-equiv} now implies that $(\mathcal{Z}^{N_v}_v)_{v \in \Z^d}$ equals $X^\tau$ in distribution. Since all the operations in the algorithm are translation-equivariant, we have thus obtained a coding from $X^\sigma$ to $X^\tau$.
	
	Let us check that this coding is finitary and that its coding radius $R$ has stretched-exponential tails.
	Indeed, since Lemma~\ref{lem:info-propagation} implies that $\{N_\zero \le n\}$ and $\cZ^n_\zero$ are measurable with respect to $(X_{u,i})_{|u| \le 5\Delta n^2, 0 \le i \le \sigma_u}$, it follows that $R \le 5\Delta N_\zero^2$. Lemma~\ref{lem:eventually-satisfied} then yields that
	\[ \Pr\big(R > 5\Delta n^2\big) \le \Pr\big(N_\zero > n\big) = \Pr\big(\zero\text{ is not satisfied at time }n\big) = e^{-\Omega\big(n^{1/(d+2)}\big)} . \tag*{\qed} \]

\subsection{Proof of Lemma~\ref{lem:eventually-satisfied}}

For the proof of Lemma~\ref{lem:eventually-satisfied}, we require a large-deviation-type result, which we now describe.
Let $X=(X_i)_{i \in \Z}$ be a sequence of non-negative random variables.
We say that $X$ is \emph{stopping-like} if there exists $\Delta>0$ such that for any finite $I,J \subset \Z$ and any non-negative numbers $(r_i)_{i \in I \cup J}$, the two events $\{ X_i > r_i\text{ for }i \in I \}$ and $\{ X_j > r_j\text{ for }j \in J \}$ are independent whenever the two sets $\bigcup_{i \in I} [i-\Delta r_i,i+\Delta r_i]$ and $\bigcup_{j \in J} [j-\Delta r_j,j+\Delta r_j]$ are disjoint.
Observe that, if there exists a sequence $(Y_i)_{i \in \Z}$ of independent random variables satisfying that, for any $i \in \Z$ and any $r \ge 0$, the event $\{X_i > r\}$ is measurable with respect to $\{Y_j\}_{|i-j|\le \Delta r}$, then $X$ is stopping-like.
Observe also that, if $X$ is a stopping-like process, then $(X_i \1_{\{X_i \le r\}})_{i \in \Z}$ is a $2\Delta r$-dependent process for any $r>0$, where a process $(Y_i)_{i \in \Z}$ is said to be $k$-dependent if $(Y_i)_{i \in I}$ and $(Y_j)_{j \in J}$ are independent whenever $I,J \subset \Z$ satisfy that $|i-j|>k$ for all $i \in I$ and $j \in J$.

\begin{lemma}\label{lem:stretched-exp-decay}
	Let $X=(X_i)_{i \in \Z}$ be a non-negative stopping-like stationary sequence and suppose that $X_0$ has exponential tails.
	Let $f \colon [0,\infty) \to [0,\infty)$ be a measurable function satisfying that $f(t) \le B t^b$ for some $B,b>0$ and all $t \ge 1$. Denote $\mu := \E f(X_0)$ and $\beta := \frac{1}{1+b}$.
	Then, for any $a>\mu$,
	\[ \Pr\big(f(X_1)+\cdots+f(X_n) \ge an\big) = e^{-\Omega(n^\beta)}\qquad\text{as }n \to \infty .\]
\end{lemma}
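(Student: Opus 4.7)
The plan is a truncation-plus-concentration argument in the spirit of ``big-and-small'' decompositions used for heavy-tailed large deviations. Fix $\epsilon := (a-\mu)/3$ and a truncation level $r=r(n)$ to be chosen below. Let $Y_i := f(X_i)\1_{\{X_i \le r\}}$ and decompose $\sum_{i=1}^n f(X_i) = S_1+S_2$ with $S_1 := \sum_i Y_i$ and $S_2 := \sum_i f(X_i)\1_{\{X_i > r\}}$. Since $f(t) \le Bt^b$ and $X_0$ has exponential tails, an elementary tail integral gives $\mu - \E Y_0 = \E[f(X_0)\1_{\{X_0>r\}}] = O(r^b e^{-cr/2})$, so $\E Y_0 \ge \mu - \epsilon$ for $r$ large. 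Hence
\[
\Pr\Bigl(\sum_{i=1}^n f(X_i) \ge an\Bigr) \;\le\; \Pr(S_1 - n\E Y_0 \ge \epsilon n) + \Pr(S_2 \ge \epsilon n).
\]

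For the first term I use that $(Y_i)$ is $2\Delta r$-dependent (by the observation in the text preceding the lemma). Partitioning $\{1,\dots,n\}$ into $k+1 := 2\Delta r + 1$ residue classes modulo $k+1$ yields that many iid subsequences of length at most $\lceil n/(k+1) \rceil$. Since $Y_i \in [0, Br^b]$ and $\Var Y_0 \le B^2\E X_0^{2b}<\infty$, Bernstein's inequality per class plus union bound gives
\[
\Pr(S_1 - n\E Y_0 \ge \epsilon n) \;\le\; O(r)\, \exp\!\Bigl(-\Omega\bigl(\tfrac{n}{r^{b+1}}\bigr)\Bigr),
\]
the exponent coming from the Bernstein denominator $m\sigma^2 + M(\epsilon n/(k+1))/3$ with $M = Br^b$, which is dominated by the $M$-term once $r^b \gg 1$.

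For the second term I use the dyadic decomposition $S_2 \le \sum_{k \ge 0} B(2^{k+1}r)^b N_k$ with $N_k := |\{i \in [n] : X_i > 2^k r\}|$. On $\{S_2 \ge \epsilon n\}$, since $\sum_k 1/(k+1)^2 < \infty$, some $k$ satisfies $N_k \ge s_k := \epsilon n/[B(2^{k+1}r)^b(k+1)^2]$. The process $(\1_{\{X_i > 2^k r\}})_i$ is $2\Delta\cdot 2^k r$-dependent with marginal $p_k \le Ce^{-c\,2^k r}$; splitting into iid classes and applying the standard Chernoff bound $\Pr(\mathrm{Bin}(m,p) \ge t) \le (emp/t)^t$ per class (so $mp/t = np_k/s_k$ and $\log(np_k/s_k) \asymp -c\,2^k r$ for $n$ large) gives
\[
\Pr(N_k \ge s_k) \;\le\; O(2^k r)\, \exp(-\Omega(s_k)) \qquad\text{whenever } s_k \ge 1.
\]

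Finally, take $r := C_0 n^{b/(b+1)^2}$ for $C_0$ large. A direct computation gives $n/r^{b+1} = n^{1/(b+1)} = n^\beta$, so $\Pr(S_1 - n\E Y_0 \ge \epsilon n) \le e^{-\Omega(n^\beta)}$. For $S_2$, $s_0 \asymp n^{(2b+1)/(b+1)^2} \ge n^\beta$, while $s_k$ decreases to $\approx 1$ at $k = k_1 := \Theta(\log n)$, where simultaneously $2^{k_1}r \asymp n^\beta$; for $k \le k_1$ the per-level bound is therefore $e^{-\Omega(n^\beta)}$, and for $k > k_1$ the crude union bound $\Pr(N_k \ge 1) \le np_k \le e^{-\Omega(n^\beta)}$ suffices. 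Summing over the $O(\log n)$ relevant levels yields $\Pr(S_2 \ge \epsilon n) \le e^{-\Omega(n^\beta)}$. The main obstacle is exactly this calibration: a smaller truncation $r$ makes the tail $S_2$ uncontrollable, while a larger $r$ destroys the Bernstein concentration of $S_1$ (since the individual bound $Br^b$ enters linearly in the denominator), and the specific choice $r \asymp n^{b/(b+1)^2}$ is precisely what equates the two error rates to the target $n^{1/(b+1)}$.
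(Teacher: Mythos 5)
Your proof is correct in its essential structure and arrives at the right rate, but it implements both halves of the truncation argument differently from the paper, which in turn forces a different calibration of the truncation level. For the bulk term the paper applies Azuma--Hoeffding directly to the Doob martingale of $\sum Y_i$, so $m$-dependence gives increment bound $O(r^{b+1})=O(r^{1/\beta})$ and one must take $r=n^\alpha$ with $\alpha<\tfrac12\beta(1-\beta)$ to land on $e^{-\Omega(n^\beta)}$; you instead block into $O(r)$ residue classes mod $2\Delta r+1$ and invoke Bernstein, whose linear term $M t/3$ with $M\asymp r^b$ dominates, allowing the much larger truncation $r\asymp n^{\beta(1-\beta)}$ for which $n/r^{b+1}=n^\beta$. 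For the tail term you do a dyadic peeling over levels $2^k r$, whereas the paper splits into ``either there are $\gtrsim (\epsilon n)^\beta$ exceedances of the threshold (bounded by selecting well-separated indices) or some single $X_i\gtrsim n^\beta$ (trivial union bound).'' Both get the same bottleneck $n^\beta$ and both would match the lower bound noted in the paper. Two small fixes are needed in your write-up. First, $\sum_{k\ge 0}(k+1)^{-2}=\pi^2/6>1$, so the weights in the definition of $s_k$ should be normalized (e.g.\ replace $\epsilon n$ by $6\epsilon n/\pi^2$) for the implication $S_2\ge\epsilon n\Rightarrow\exists k,\ N_k\ge s_k$ to hold. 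Second, the crossover $k_1$ is mischaracterized: the per-level Chernoff bound $\Pr(N_k\ge s_k)\lesssim 2^k r\,e^{-\Omega(s_k)}$ yields $e^{-\Omega(n^\beta)}$ precisely while $s_k\gtrsim n^\beta$, which (using $(1-\beta)/b=\beta$) is exactly when $2^k r\lesssim n^\beta$, whereas ``$s_{k}\approx 1$'' happens only at the larger scale $2^{k}r\approx n^{1/b}$. So the correct tiling is: for $2^kr\lesssim n^\beta$ use the $e^{-\Omega(s_k)}$ bound, and for $2^kr\gtrsim n^\beta$ use the crude bound $\Pr(N_k\ge 1)\le n p_k\le Cn e^{-c\,2^k r}\le e^{-\Omega(n^\beta)}$; your two regimes are right but the threshold separating them is $2^kr\asymp n^\beta$, not $s_k\approx 1$.
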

\begin{proof}
The proof involves a truncation argument. Thus, we first write $f(X_i)=Y_i+Y'_i$, where
\[ Y_i := f(X_i) \1_{\{X_i \le n^\alpha \}} \qquad\text{and}\qquad Y'_i := f(X_i) \1_{\{X_i>n^\alpha\}}, \]
and $\alpha$ is any positive number less than $\frac12 \beta(1-\beta)$.
Then, for any $\epsilon>0$,
\[ \Pr\big(f(X_1)+\cdots+f(X_n) \ge (\mu+2\epsilon)n\big) \le \Pr\big(Y_1+\cdots+Y_n \ge \mu n + \epsilon n\big) + \Pr\big(Y'_1+\cdots+Y'_n \ge \epsilon n\big) .\]
Thus, it suffices to bound separately the two terms on the right, showing that each is $e^{-\Omega(n^\beta)}$.

For the first term, we prove the stronger bound
\begin{equation}\label{eq:truncated_sum}
\Pr\big(Y_1+\cdots+Y_n \ge \mu n + \epsilon n\big) = e^{-\Omega(n^{1-2\alpha/\beta})} .
\end{equation}
For $0 \le i \le n$, denote
\[ Z_i := \E[Y_1+\cdots+Y_n \mid \cF_i], \qquad\text{where }\cF_i := \sigma\big(\big\{ X_j \1_{X_j \le n^\alpha}, \1_{X_j \le n^\alpha} \big\}_{1 \le j \le i}\big) .\]
Note that $(Z_i)_{0 \le i \le n}$ is a martingale satisfying $Z_0=(\E Y_0)n \le \mu n$ and $Z_n=Y_1+\cdots+Y_n$. Hence, by the Azuma--Hoeffding inequality~(see, e.g., \cite{williams1991probability}),
\[ \Pr\big(Y_1+\cdots+Y_n \ge \mu n + t\big) \le \Pr\big(Z_n \ge Z_0 + t\big) \le \exp\left(-\frac{t^2}{2\sum_{i=1}^n c_i^2}\right) , \qquad t \ge 0,\]
where $c_i :=\|Z_i-Z_{i-1}\|_\infty$ is the essential supremum of the increment $Z_i-Z_{i-1}$. Thus, \eqref{eq:truncated_sum} will follow if we show that $c_i \le Cn^{\alpha/\beta}$.
Indeed, since $Y$ is a $Bn^{\alpha b}$-bounded $2\Delta n^\alpha$-dependent process,
\[ |Z_i-Z_{i-1}|
 = \bigg|\sum_{j=i}^n \big(\E[Y_j \mid \cF_i] - \E[Y_j \mid \cF_{i-1}]\big)\bigg|
 \le\sum_{j=i}^{i+\lfloor 2\Delta n^\alpha \rfloor} \Big|\E[Y_j \mid \cF_i] - \E[Y_j \mid \cF_{i-1}]\Big| \le Cn^{\alpha/\beta} .\]

We now turn to the second term. Note that $\{Y'_1+\cdots+Y'_n \ge \epsilon n\} \subset E \cup F$, where
\[ \cI := \big\{ 1 \le i \le n : X_i > n^\alpha \big\}, \qquad E := \Big\{ |\cI| \ge (\epsilon n)^\beta \Big\} ,\qquad F := \Big\{ \max_{1 \le i \le n} X_i \ge \tfrac{1}{B} (\epsilon n)^\beta \Big\} .\]
For $I \subset \Z$, denote $d(I) := \min \{ |i-j| : i,j \in I,~ i \neq j\}$. Since, for any $I \subset \Z$ and integer $d \ge 1$, there exists a subset $I' \subset I$ such that $|I'| \ge |I|/d$ and $d(I') \ge d$, we obtain
\[ \Pr(E) \le \Pr\Big(\exists I \subset \cI,~|I|=\left\lceil\tfrac{(\epsilon n)^\beta}{2n^\alpha+2}\right\rceil,~d(I) \ge 2n^\alpha+1 \Big) . \]
Since the events $\{X_i>r\}_{i \in I}$ are independent for any finite $I \subset \Z$ and $0 \le r < d(I)/2$, we have
\[ \Pr(E) \le \binom{n}{\left\lceil\tfrac{(\epsilon n)^\beta}{2n^\alpha+2}\right\rceil} \cdot \Pr(X_0 > n^\alpha)^{\left\lceil\tfrac{(\epsilon n)^\beta}{2n^\alpha+2}\right\rceil} \le e^{\tfrac{(\epsilon n)^\beta}{n^\alpha} \log n-c(\epsilon n)^\beta} = e^{-\Omega(n^\beta)} . \]
Finally, it is immediate that $\Pr(F) \le n \cdot \Pr(X_0 \ge \tfrac{1}{B} (\epsilon n)^\beta) = e^{-\Omega(n^\beta)}$.
\end{proof}

\begin{remark}
	The bound in Lemma~\ref{lem:stretched-exp-decay} is tight, as the following simple example shows. Let $(Y_i)_{i \in \Z}$ be independent unbiased coin tosses, and let $X_i$ be the length of the streak of heads containing position~$i$, i.e., $X_i := \max\{k+m : Y_j=1\text{ for }i-k \le j < i+m,~k,m \ge 0 \}$. Clearly, $X$ is a stationary sequence (in fact, it is \ffiid\ with exponential tails) and $X_0$ has exponential tails. Moreover, since $X_i$ is a stopping time with respect to $(\{Y_j\}_{|j-i| \le n})_n$, it follows that $X$ is stopping-like (with $\Delta=1$). On the other hand, $\Pr(X_1^b+\cdots+X_n^b \ge an) \ge \Pr(Y_1=\cdots=Y_{\lceil (an)^\beta \rceil}=1)=2^{-\lceil (an)^\beta \rceil}$.
\end{remark}

\begin{proof}[Proof of Lemma~\ref{lem:eventually-satisfied}]
	As Lemma~\ref{lem:update-methods-equiv} implies that both update methods yield the same probability for the event in question, we may assume here that update method \textbf{(B)} is used in the algorithm.
	Denote $L^\infty_w := \sup_n L^n_w$. Let $\cL$ denote the vertices which remain loaded indefinitely.
	Let $v \in \Z^d$. For an integer $i$, we write $v+i$ for the element $v+ie_1 \in \Z^d$.
	Let us check that if, for some $k \ge 0$,
	\[ \cL_k := \{ 0 \le i \le k : v+i \in \cL\} \neq \emptyset \quad\text{or}\quad N_k := \sum_{i=0}^k (L^\infty_{v+i}+1) > \sum_{i=0}^k |B_{\tau_{v+i}(X)}| =: M_k ,\]
	then $v$ is satisfied at time $N_k$.
	Assume towards a contradiction that $v$ is not satisfied at time $N_k$. Then, by Claim~\ref{cl:props},
	\[ (v,-1)=(U^0_v,I^0_v) \prec (U^1_v,I^1_v) \prec \cdots \prec (U^{N_k+1}_v,I^{N_k+1}_v) .\]
	Thus, since $0 \le I^n_v \le L^\infty_{U^n_v}$ for all $n \ge 1$ by definition of $L^n_u$, we have $v+k \prec U^{N_k+1}_v$. In particular, the set of times
	\[ T := \{ 1 \le n \le N_k : U^n_v \prec U^{n+1}_v \} \]
	at which the $v$-simulator moved its source location to the right is of size $|T| \ge k+1$.
	Moreover, since, for $1 \le n \le N_k$, $n \in T$ if and only if case (iii) of the algorithm is executed at step $n$ for the vertex $v$, which in turn occurs only if $U^{n-1}_v \notin \cL$ and $I^{n-1}_v=L^{n-1}_{U^{n-1}_v}=L^\infty_{U^{n-1}_v}$, we conclude that
	\[ \cL_k = \emptyset,\qquad |T|=k+1, \qquad (U^{N_k+1}_v,I^{N_k+1}_v)=(v+k+1,0) .\]
	Note that if the input from some location $(u,i)$ is transported by some $v'$-simulator by time $n$ (i.e., $(v',u,i,w,j) \in D^n$ for some $(w,j)$), then $v' \preceq u$ and every $v''$ such that $v' \prec v'' \preceq u$ must be satisfied at time $n$.
	Thus, since by step $N_k+1$, $N_k$ inputs were transported from locations $(u,i)$ with $v \preceq u \preceq v+k$, but no more than $M_k$ inputs were transported by $v'$-simulators with $v \preceq v' \preceq v+k$, it follows that $v$ is satisfied at time $N_k$, which is a contradiction.
	Hence, $v$ is satisfied at time $N_k$.
	
	We have thus shown that
	\[ \Pr(v\text{ is not satisfied at time }n) \le \Pr\big(\forall k \ge 0 ~ (\cL_k=\emptyset\text{ and }N_k \le M_k)\text{ or }N_k > n\big) .\]
	Using that $L^n_v \le \sigma_v(Y^n) \le m$ almost surely for some $m \ge 1$, and taking $k=\frac{n}{4m}$, we get
	\[ \Pr(v\text{ is not satisfied at time }n) \le \Pr\big(\cL_{\frac{n}{4m}}=\emptyset\text{ and }N_{\frac{n}{4m}} \le M_{\frac{n}{4m}}\big) .\]
	Let $a$ be such that $\E \sigma_v+1>a>\E|B_{\tau_v}|$, and note that
	\[ \Pr\big(\cL_{\frac{n}{4m}}=\emptyset\text{ and }N_{\frac{n}{4m}} \le M_{\frac{n}{4m}}\big) \le \Pr\big(\cL_{\frac{n}{4m}}=\emptyset\text{ and }N_{\frac{n}{4m}} \le \tfrac{an}{4m}\big) + \Pr\big(M_{\frac{n}{4m}} \ge \tfrac{an}{4m}\big) .\]
	It remains to bound the terms on the right-hand side.
Note that, if $u \notin \cL$ then $L^\infty_u=\sigma_u(X)$. Thus, since $(\sigma_{v+i}(X)+1)_{i \in \Z}$ is an \iid\ sequence of bounded random variables with expectation strictly larger than $a$, standard large deviation bounds yield that $\Pr\big(\cL_{n/4m}=\emptyset\text{ and }N_{n/4m} \le \tfrac{an}{4m}\big)$ is exponentially small in $n$ (alternatively, we could appeal to Lemma~\ref{lem:stretched-exp-decay} with the sequence $(m-\sigma_{v+i}(X))_{i \in \Z}$ to obtain the required stretched-exponential bound).	
Towards establishing the bound on the second term, observe that, by~\eqref{eq:F_n} and~\eqref{eq:linear-stopping-process}, $(\tau_{v+i})_{i \in \Z}$ is a non-negative stopping-like stationary sequence with exponential tails. Thus, since $|B_n| = O(n^{d+1})$ by~\eqref{eq:linear-stopping-process}, Lemma~\ref{lem:stretched-exp-decay} implies that
\[ \Pr\big(M_{n/4m} \ge \tfrac{an}{4m}\big) \le e^{-\Omega\big(n^{1/(d+2)}\big)}\qquad\text{as }n \to \infty . \qedhere \]	
\end{proof}

\subsection{Proof of Lemma~\ref{lem:info-propagation}}
	Let $\cF_{v,r}$ denote the $\sigma$-algebra generated by $(X_{u,i})_{|u-v| \le r, 0 \le i \le \sigma_u}$. Set $r_0 := 0$ and let $r_n$ denote the smallest integer $r>r_{n-1}$ for which the random variables stated in the lemma are $\cF_{v,r}$-measurable. To prove the lemma, it suffices to show that $r_n \le r_{n-1} + 5\Delta n$ for $n \ge 1$, as this implies that $r_n \le 5\Delta n^2$. We henceforth abbreviate ``is $\cF$-measurable'' to ``is in $\cF$''.

	Let $n \ge 1$ and denote $r:=r_{n-1}$. We aim to show that $\mathbb{S}^n_v$ and $\{ Z^n_{w,j} \}_{|w-v| \le \Delta n, j \ge 0}$ are in $\cF_{v,r+5 \Delta n}$, using that $\mathbb{S}^{n-1}_v$ and $\{ Z^{n-1}_{w,j} \}_{|w-v| \le \Delta (n-1), j \ge 0}$ are in $\cF_{v,r}$. Throughout the proof, we repeatedly make use of the following easily verifiable properties:
	\[ v \preceq U^n_v \preceq v+ne_1 \qquad\text{and}\qquad (W^n_v,J^n_v) \in v+B_n \qquad\text{for all }v \in \Z^d\text{ and }n \ge 0 ,\]
	which, in particular, by~\eqref{eq:linear-stopping-process}, imply that
	\[ |U^n_v - v| \le n \qquad\text{and}\qquad |W^n_v-v| \le \Delta n \qquad\text{for all }v \in \Z^d\text{ and }n \ge 0 .\]
	
	\smallskip\noindent
	{\bf Step 1:} Consider step $n$ of the algorithm for $v$ and let $\cC_v^n \in \{\text{i},\dots,\text{iv}\}$ denote which case of the algorithm was executed. Let us show that $\cC_v^n$ is in $\cF_{v,r+n}$. To this end, we first check that the event $\{\cC_v^n = \text{i}\} = \{v\text{ is satified at time }n-1\} = \{(W^{n-1}_v,J^{n-1}_v) \notin B_{\tau_v(Z^{n-1})}\}$ is in $\cF_{v,r}$. Indeed, since $(W^{n-1}_v,J^{n-1}_v) \in v+B_{n-1}$, this event depends only on $(W^{n-1}_v,J^{n-1}_v)$ and $\{ Z^{n-1}_{w,j} \}_{(w,j) \in v+B_{n-1}}$, both of which are in $\cF_{v,r}$ by the definition of $r$ and by~\eqref{eq:linear-stopping-process}.
	
	Next, let us check that $L^{n-1}_{U^{n-1}_v}$ is in $\cF_{v,r+n}$. Since $U^{n-1}_v$ is in $\cF_{v,r}$ and since $v' \preceq U^{n-1}_{v'} \prec v'+ne_1$ for all $v'$, it suffices to check that the event $\{(v',u,i,w,j) \in D^{n-1}\text{ for some }w,j\}$ is in $\cF_{v,r+n}$ for any $v-ne_1 \preceq v' \preceq u \prec v+ne_1$ and any $i$.
	Indeed, this event is in $\cF_{v',r} \subset \cF_{v,r+|v'-v|} \subset \cF_{v,r+n}$.
	
	Finally, we check that the event that $U^{n-1}_v$ is loaded at time $n-1$ is in $\cF_{v,r+n}$.
	Note that, by~\eqref{eq:previous-source-locations-are-known}, $Y^{n-1}_{u,i}=X_{u,i}$ for $i \le L^{n-1}_u$ and $Y^{n-1}_{u,i}=\emptyset$ for $i>L^{n-1}_u$. Thus, since $U^{n-1}_v$ and $L^{n-1}_{U^{n-1}_v}$ are in $\cF_{v,r+n}$ and since $|U^{n-1}_v-v| \le n$, it follows that $(Y^{n-1}_{U^{n-1}_v,i})_{i \ge 0}$ is in $\cF_{v,r+n}$. Hence, as $\sigma$ is a simple stopping-process, we have that $\{ \sigma_{U^{n-1}_v}(Y^{n-1}) > L^{n-1}_{U^{n-1}_v} \}$ is in $\cF_{v,r+n}$, showing that the event that $U^{n-1}_v$ is loaded at time $n-1$ is in $\cF_{v,r+n}$.
	
	\smallskip\noindent
	{\bf Step 2:} Observe that $(U^n_v,I^n_v,W^n_v,J^n_v)$ is in $\cF_{v,r+n}$.
	Indeed, this follows from step 1, since in any case of the algorithm, $(U^n_v,I^n_v,W^n_v,J^n_v)$ is a deterministic function of $(U^{n-1}_v,I^{n-1}_v,W^{n-1}_v,J^{n-1}_v)$.
	
	\smallskip\noindent
	{\bf Step 3:}
	Let us check that $T^n_v$ is in $\cF_{v,r+3\Delta n}$. To this end, it suffices to check that the event that $v$ is the lexicographical-minimal element of $Q^{n-1}(W^{n-1}_v,J^{n-1}_v)$ is in $\cF_{v,r+3\Delta n}$, as $T^n_v$ equals 1 in this case and 0 otherwise. For this, it suffices to check that the set $Q^{n-1}(W^{n-1}_v,J^{n-1}_v)$ itself is in $\cF_{v,r+3\Delta n}$.
Since $(W^{n-1}_v,J^{n-1}_v)$ is in $\cF_{v,r}$ and since $|W^{n-1}_v-v| \le \Delta(n-1)$, it suffices to check that $Q^{n-1}(w,j)$ is in $\cF_{v,r+3\Delta n}$ for any $(w,j)$ such that $|w-v| \le \Delta n$. Fix such a $(w,j)$. We need to show that the event $\{v' \in Q^{n-1}(w,j)\}$ is in $\cF_{v',r+3\Delta n}$ for any $v' \in \Z^d$. Fix $v'$ and note that $v' \notin Q^{n-1}(w,j)$ unless $|v'-w| \le \Delta (n-1)$. Thus, we may assume that $|v'-w| \le \Delta (n-1)$. Then, by what we have shown in step~1 and since $Z^{n-1}_{w,j}$ is in $\cF_{v',r}$ (by the definition of $r$), the event $\{v' \in Q^{n-1}(w,j)\}$ is in $\cF_{v',r+n} \subset \cF_{w,r+n+|v'-w|}$.
	We have therefore shown that $Q^{n-1}(w,j)$ is in $\cF_{w,r+2\Delta n} \subset \cF_{v,r+2\Delta n+|w-v|} \subset \cF_{v,r+3\Delta n}$.

	\smallskip\noindent
	{\bf Step 4:} Observe that $\mathbb{S}^n_v$ is in $\cF_{v,r+3\Delta n}$. Indeed, since $D^n_v$ is determined by $\{ (U^t_v,I^t_v,W^t_v,J^t_v,T^t_v) \}_{1 \le t \le n}$, this follows immediately from steps 2 and 3.
	
	\smallskip\noindent
	{\bf Step 5:} Let us show that $\{ Z^n_{w,j} \}_{|w-v| \le \Delta n, j \ge 0}$ is in $\cF_{v,r+5\Delta n}$. To this end, let $(w,j)$ be such that $|w-v| \le \Delta n$ and denote
	\[ D^n_{w,j} := \{ (v',u,i) : (v',u,i,w,j) \in D^n \} .\]
	By step~4, for any $(v',u,i)$, the event $\{(v',u,i) \in D^n_{w,j} \}$ is in $\cF_{v',r+3\Delta n} \subset \cF_{w,r+3\Delta n+|v'-w|}$. Since $(v',u,i) \notin D^n_{w,j}$ unless $|v'-w| \le \Delta n$, we conclude that $D^n_{w,j}$ is in $\cF_{w,r+4\Delta n} \subset \cF_{v,r+4\Delta n+|w-v|} \subset \cF_{v,r+5\Delta n}$. Recall that $|D^n_{w,j}| \le 1$ by~\eqref{eq:once-written}. If $D^n_{w,j}=\emptyset$ then $Z^n_{w,j}=\emptyset$. Otherwise, $D^n_{w,j}=\{(v',u,i)\}$ and $Z^n_{w,j}=X_{u,i}$ for some $(v',u,i)$ such that $|u-v'| \le n$, in which case, $|u-v| \le |u-v'|+|v'-w|+|w-v| \le 3\Delta n$. It follows that $Z^n_{w,j}$ is in $\cF_{v,r+5\Delta n}$.
\qed

\section{Open questions}
\label{sec:open}

We have shown in Theorem~\ref{thm:ising} that the sub-critical Ising measure is \fvffiid\ with stretched-exponential tails, and we know from Remark~\ref{rem:ising-ffiid-exp-tails} that it is also \ffiid\ with exponential tails. The following question naturally arises:

\begin{question}
Let $d \ge 2$ and let $\mu$ be the unique Gibbs measure for the Ising model on $\Z^d$ at inverse temperature $\beta<\beta_c(d)$. Is $\mu$ \fvffiid\ with exponential tails?
\end{question}

A similar situation occurs in the more general setting of PCAs considered in Section~\ref{sec:pca}, where we have shown in Theorem~\ref{thm:main} that the limiting distribution of an exponentially uniformly ergodic PCA is \fvffiid\ with stretched-exponential tails, and we know from Theorem~\ref{thm:main0} that it is also \ffiid\ with exponential tails. A positive answer to the following natural question would yield a positive answer to the previous one:

\begin{question}
Let $\mu$ be the limiting distribution of an exponentially uniformly ergodic PCA (as defined in Section~\ref{sec:pca}). Is $\mu$ \fvffiid\ with exponential tails?
\end{question}

As we have mentioned in remarks after Theorem~\ref{thm:ising}, the critical Ising measure is known to be \ffiid, but is not known to be \fvffiid. This question was raised by van den Berg and Steif~\cite[Question~1]{van1999existence} and we reiterate it here:

\begin{question}
Let $d \ge 2$ and let $\mu$ be the unique Gibbs measure for the Ising model on $\Z^d$ at the critical inverse temperature $\beta=\beta_c(d)$. Is $\mu$ \fvffiid?
\end{question}

\bibliographystyle{amsplain}
\bibliography{library}

\end{document}